\newtheorem{Theorem}{Theorem}
\newtheorem{propo}{Proposition}
\newtheorem{coro}{Corollary}
\newtheorem{Lemma}{Lemma}
\newtheorem{Definition}{Definition}
\newenvironment{proof} {{\bf Proof.}}{\hfill $\Box$}
\newcommand{\Sn} {{\mathcal S}_n}
\begin{document}

\begin{center}
{\bf \Large Indecomposable Permutations, Hypermaps and Labeled Dyck Paths}\\[12pt]

{\large Robert Cori}\\[14pt]
{\em Labri, Universit\'e Bordeaux 1\\
351 cours de la Lib\'eration F33400 Talence (France)}\\
{\tt robert.cori@labri.fr}
\end{center}


\vskip 3cm
\begin{abstract}
Hypermaps were introduced as an algebraic tool for  the representation
 of  embeddings of  graphs on an orientable surface. Recently a  bijection
was given between hypermaps and indecomposable permutations; 
 this sheds new light on
the subject by connecting  a hypermap  to a simpler object.\ In this paper, a
bijection between indecomposable  permutations and labelled Dyck paths is
proposed, from which a few enumerative results concerning
hypermaps and maps follow. We obtain for instance an inductive
 formula for the number of
hypermaps with $n$ darts, $p$ vertices and $q$ hyper-edges; the latter is
also  the number of indecomposable permutations of
${\mathcal S}_n$ with $p$ cycles and $q$ left-to-right maxima.
The distribution of these parameters among all permutations is also considered.
\end{abstract}

\section*{Introduction}

Permutations and maps on surfaces have and old common history.
Heffter \cite{heffter}
was probably  the first who mentioned the fact  that any embedding of a
graph on an orientable  surface could be represented by a pair consisting of a
permutation and a fixed point free involution;  J. Edmonds \cite{edmonds}
and J. Youngs \cite{youngs}
 gave in the early
60's a more precise presentation of this  idea by showing how
to   compute the faces  of an  embedding
using the  cycles of the product of the  permutation and
 the fixed point free involution, giving a purely combinatorial
definition of the genus.
 A. Jacques \cite{jacques} proved that this
 could be generalized to
any  pair of permutations (called hypermap in \cite{coriT}), 
hence relaxing the condition  that the
second one should be a fixed point free involution.
He defined the genus of a pair of permutations
by a formula  involving the number of
their  cycles and of that of their product.

W. T. Tutte \cite{tutte2} generalized these constructions by introducing
a combinatorial object consisting of
three fixed point free involutions
 in order to represent embeddings in
 a nonorientable surface.

The combinatorial representation  allows one to obtain
 results on automorphisms
of maps and hypermaps, for instance  A. Mach\'{\i} \cite{machi} obtained a
combinatorial version of the Riemann--Hurwitz formula for hypermaps.
 A coding theory of
rooted maps  by words \cite{coriT}  had also some extent for
explaining the very elegant formulas found by W. T. Tutte \cite{tutte1} for the
enumeration of maps.\ In the same years Jones and Singerman
 \cite{jonesSingerman}
settle some important algebraic properties of maps.
Recently G. Gonthier
see \cite{gonthier}, used hypermaps in giving a formal proof
of the 4 colour theorem.
A survey of the combinatorial and algebraic properties of maps and
hypermaps is given in   \cite{coriMachi}.

\bigskip

In 2004 P. Ossona de Mendez and P. Rosenstiehl proved an important
combinatorial result: they constructed a bijection
between (rooted) hypermaps and 
indecomposable permutations (also called connected or
irreducible). Indecomposable permutations are a central object
in combinatorics (see for instance \cite{stanley} Problem 5.13), 
they were  considered in different contexts, and
probably for the first time by A. Lentin \cite{LentinT} while  solving
 equations in the free monoid.
They were also considered for instance  as a basis of a Hopf
algebra defined by Malvenuto and Reutenauer (see \cite{aguiarSottile}, or
 \cite{duchampHivert}), and  in the enumeration  of a certain kind of Feynman 
diagrams \cite{cvitanovic}.

\bigskip

In this paper we  present the Ossona-Rosenstiehl result in  simpler terms and
focus on the main property of the bijection: the number of cycles
and of left-to-right maxima of the
indecomposable permutation are equal to
the
number of vertices and hyper-edges of the rooted hypermap associated with 
it.

This property have some nice consequences for the enumeration: 
it allows one to give a formula for the number of rooted hypermaps on $n$
darts, or with $n$ darts and $p$ vertices. The property shows that the
number of indecomposable permutations of $\Sn$ with $p$ cycles and $q$
left-to-right maxima is symmetric in $p, q$. By a straightforward
argument this result can be generalized to all permutations, answering
positively to a conjecture of Guo-Niu Han and D. Foata. We  introduce a simple
bijection between some labelled  Dyck paths and permutations
which allows us to obtain  a formula
for the polynomials enumerating indecomposable permutations by the number
of cycles and left-to-right maxima, (hence of hypermaps by vertices and
hyper-edges).

The  paper is organized as follows: in Section 1, we give a few 
elementary results on indecomposable permutations focusing mainly on
the parameters  left-to-right maxima and cycles. Section 2 is devoted
to hypermaps and  the bijection of P. Ossona de Mendez and P. Rosenstiehl.\ All the details of
the  proof of correctness are in \cite{ossonaRosenstiehl1}; we give
here the key points and some examples in order to facilitate
the reading of their paper. The main result of the present paper and consequences of this bijection are 
given at the end at this section

In Section 3 we recall some notions about Dyck paths  and their 
labelling. We  describe a bijection between them and permutations and
show the main properties of this bijection. In Section 4  we
introduce a family of polynomials enumerating permutations and
show 
 a formula for the
generating function of the permutations with respect to the number of
left-to-right maxima and cycles. In the  last section
we restrict the hypermaps to be maps and permutations to be fixed point free
involutions, obtaining in a simpler way some old
enumeration formulas for them (see \cite{walshLehman}, \cite{arquesBeraud}).

\section{Indecomposable permutations}

In this section we give some notation
and recall some basic results on permutations.\
Then we shall focus on indecomposable permutations,
often called also connected permutations.

\subsection{Definition and counting formulas}
Permutations are the central object of this paper.\ We shall express them in two ways either as {\em
  sequences}, or as {\em sets of cycles}.

The set of all permutations (i. e. the symmetric group) on
$\{1,2, \ldots, n\}$ will be denoted by $\Sn$.
The notation of a permutation as a sequence is :
$$\alpha \ \ = \ \  a_1, a_2, \ldots,  a_n$$
In this setting, $a_i$ is the image of $i$ by $\alpha$, also denoted
 $\alpha(i)$.

The product of two
 permutations $\alpha, \beta$ is  denoted $\alpha\beta$ for which
 we use the following convention
$\alpha\beta(i) = \alpha(\beta(i))$.

\begin{Definition}
A permutation $\theta = a_1, a_2, \ldots, a_n$ is {\em decomposable},
if there exists $p < n$ such that for all $i, \ \ 1 \leq i \leq p$~:
$$ 1 \leq  a_i \leq p, $$ it is called
{\em indecomposable} otherwise.
\end{Definition}

Hence a  permutation $\theta = a_1, a_2, \ldots, a_n$ is indecomposable
if for any $p < n$
the left factor
subsequence $a_1, a_2, \ldots , a_p$ contains at least one  $a_j > p$.
Equivalently, $\theta$ is indecomposable if  for $p<n$,
 there is no initial interval
 $[1, \ldots , p]$ fixed by $\theta$, or no union of a subset of
the set of  cycles equal to  $[1,  \ldots , p]$.\ For instance,
3,1,2,5,4 is a decomposable permutation of $S_5$, while 
any permutation $\alpha=a_1,a_2,\ldots, a_n$ for which $a_1=n$ is indecomposable.

Let  $c_n$ be the number of indecomposable permutations
of $\Sn$.\ The following formula is well known, and is obtained by
noting that any decomposable permutation
can be written as the product of an
 indecomposable permutation of length
 $p< n$  and a permutation of length $n-p$ on $\{p+1, \ldots , n\}$~:

$$ c_n = n! - \sum_{p=1}^{n-1}c_p(n-p)!$$

\noindent
{\bf Remark}
Less known is the following formula, which is  also useful to compute
$c_n$~:
$$ c_{n+1} \ \ = \ \ \sum_{p=1}^{n}pc_p(n-p)!$$
this formula is obtained from the following construction of all
indecomposable permutations of ${\mathcal S}_{n+1}$~:

\begin{itemize}
\item
take any permutation $\alpha$ of $\Sn$
\item decompose $\alpha$ as the concatenation
of an indecomposable permutation $\theta$ of length $p>0$ and a permutation
 $\beta$ of length $n-p$
\item insert $n+1$ inside $\theta$ in any position except at the end of it
(there are $p$ such positions).
\end{itemize}

From these formulas we obtain the  first values of the number of
 indecomposable permutations which are~:
$$1, 1, 3, 13, 71, 461, 3447, \ldots $$

\subsection{Left-to-right maxima and cycles}
Let $\alpha = a_1, a_2, \ldots , a_n$ be a permutation,
$a_i$ is a left-to-right maximum if $a_j < a_i$ for all $1 \leq j < i$.

\noindent
For any  $\alpha$,  $a_1$ is a left-to-right maximum,  and  $a_k= n$
is also a left-to -right maximum, hence  the number
of left-to-right maxima of a  permutation
$\alpha$ is equal to 1 if and only if
 $a_1 = n$.

\subsubsection{Bijection}
The following algorithm describes  a bijection from the set of
 permutations  having  $k$ cycles
to the set of permutations having  $k$ left-to-right maxima. It is
often  called the {\em First fundamental  transform} and extensively used
for the determination of permutation statistics (see \cite{foataSchutz}).

To obtain the transform $\beta$ from $\alpha$, write the cycles 
 $\Gamma_1, \Gamma_2 , \ldots , \Gamma_k$ of the
 permutation
 $\alpha$,\ 
such that the first element of each cycle  $\Gamma_i$
 is the maximum among  the elements of
 $\Gamma_i$.\ Then
 reorder the  $\Gamma_i$ in such a way that the first elements of the cycles
 appear in increasing order, and finally,
delete  the parenthesis around the cycles obtaining $\beta$
 as a sequence.

\medskip

For instance, let $\alpha = 4, 7, 2, 1, 3, 6, 5, 9, 8$,
then we write~:

$$ \alpha = (1, 4) (2, 7, 5, 3) (6) (8, 9)$$

\noindent
putting the maximum at the begin of each cycle and
reordering the cycles gives~:

$$ \alpha = (4, 1) (6) (7, 5, 3, 2)  (9, 8)$$

\noindent
hence
$$ \beta = 4, 1, 6, 7, 5, 3, 2, 9, 8$$

Note that one gets $\alpha$ from its transform $\beta$ by
opening
parenthesis  before each left-to-right maxima, and closing
them before opening a new one and at the end of the sequence.

\begin{propo} The permutation  $\alpha$  is indecomposable if and only if
its first fundamental  transform is indecomposable.
\end{propo}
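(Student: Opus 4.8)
The plan is to show that $\alpha$ and its first fundamental transform $\beta = b_1, \ldots, b_n$ share the same set of \emph{cutting points}, where a cutting point is an integer $p<n$ for which $\{1,\dots,p\}$ is stable under the permutation. By the definition of decomposability, a permutation is decomposable exactly when it admits such a $p$, so a bijection between the cutting points of $\alpha$ and those of $\beta$ yields the equivalence at once. I would first reformulate the condition on $\alpha$: since $\alpha$ is a bijection, $\{1,\dots,p\}$ is stable iff it is a union of cycles of $\alpha$; and because every element of a cycle is at most its maximum, this happens iff $\{1,\dots,p\}$ is precisely the union of the cycles of $\alpha$ whose maximum is $\le p$.

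Next I would read off the structure of $\beta$ from the transform. Writing each cycle with its maximum first and ordering the cycles by increasing maxima $m_1<m_2<\dots<m_k$, the sequence $\beta$ is the concatenation of blocks $\Gamma_1'\Gamma_2'\cdots\Gamma_k'$ in which every entry of $\Gamma_i'$ is $\le m_i$. Consequently the first entry $m_i$ of each block is strictly larger than all preceding entries (it is a left-to-right maximum), which is the recovery property already noted. The easy direction then follows in both senses: if the first $j$ complete blocks occupy positions $1,\dots,q$, their entries are exactly the elements of the cycles $\Gamma_1,\dots,\Gamma_j$, so $\{b_1,\dots,b_q\}=\{1,\dots,q\}$ iff these cycles form the interval $\{1,\dots,q\}$, that is, iff $q$ is a cutting point of $\alpha$. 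This already matches the cutting points of $\alpha$ with those cutting points of $\beta$ that fall on a block boundary.

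The one delicate point, and the main obstacle, is to rule out a cutting point of $\beta$ lying strictly inside a block, since otherwise $\beta$ might acquire extra cutting points with no counterpart in $\alpha$. Suppose $q$ were such a point, inside $\Gamma_{j+1}'$ but past its first entry $m_{j+1}$. Then $m_{j+1}$ lies among $b_1,\dots,b_q$ and is the largest of these entries: everything before it is $<m_{j+1}$, and the later entries of the partial block, not being the cycle maximum, are also $<m_{j+1}$. Hence $\{b_1,\dots,b_q\}=\{1,\dots,q\}$ forces $m_{j+1}=q$. But the block is only partial, so some element $v$ of $\Gamma_{j+1}$ occupies a position $>q$; as $v\neq m_{j+1}$ we get $v<m_{j+1}=q$, so $v\in\{1,\dots,q\}=\{b_1,\dots,b_q\}$, meaning $v$ also occurs among the first $q$ entries, contradicting the injectivity of $\beta$. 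Therefore every cutting point of $\beta$ sits on a block boundary, the matching of the previous paragraph is a bijection between the cutting points of the two permutations, and $\alpha$ is indecomposable if and only if $\beta$ is.
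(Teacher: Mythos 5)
Your proof is correct and follows essentially the same route as the paper's: both rest on the two observations that a prefix of $\beta$ ending at a cycle boundary collects exactly a union of cycles of $\alpha$, and that a prefix cut strictly inside a cycle contains that cycle's maximum while missing a smaller element of it, so it can never equal an initial interval $\{1,\ldots,q\}$. Your cutting-point formulation is in fact a cleaner packaging of these facts, since the bijection between cutting points makes the converse direction (which the paper dispatches with ``similar arguments'') follow automatically.
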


\begin{proof}
If $\alpha$ is indecomposable, then for any $p < n$
the subset $\{1,2,\ldots, p\} $ is not the   union  of  cycles of $\alpha$.\
Hence for any left factor  $b_1, b_2, \ldots, b_p$ of $\beta$,
 where $b_p$ is the last
element of a cycle of $\alpha$ their exists a $b_j$ such that $b_j > p$.
But this is also true if $b_p$ is not the last element of the cycle
of $\alpha$
since the maximal element of each cycle of $\alpha$
 is put at the beginning of its
cycle in order to obtain $\beta$.
The proof of the converse is obtained  by similar  arguments.
\end{proof}
\medskip

\noindent
{\bf Corollary.} \ {\it The  number of  indecomposable
 permutations with  $k$ cycles
is equal to the number of indecomposable  permutations with
 $k$ left-to-right maxima}.

\medskip

\noindent
{\bf Remark.}\ It is customary for permutations to consider
 in an obvious manner
 left-to-right minima, right-to-left maxima and right-to-left minima.
Consider for a  permutation $a_1, a_2, \ldots , a_n$, the reverse
$a_n, a_{n-1}, \ldots , a_1$   and the complement $n+1-a_1, n+1-a_2, \ldots ,
n+1- a_n$; these operations show that the statistics for these four parameters
 are equal. However
this is not true  for indecomposable permutations.\ For instance, the numbers
of indecomposable permutations of ${\mathcal S}_4$
with $k=1,2,3, 4$ left-to-right minima are
0, 7, 5, 1, respectively, and those with the same number of left-to-right maxima
are $6, 6, 1, 0$.
Nevertheless the operation $\alpha \rightarrow \alpha^{-1}$
transforms an indecomposable permutation into an indecomposable permutation,
 showing that the number of indecomposable permutations of $\Sn$  with
$k$ left-to-right maxima (resp. minima) is equal to the number of
 indecomposable permutations of $\Sn$  with $k$ right-to-left minima
 (resp. maxima).

\subsubsection{Enumeration}

It is well known that the  number of permutations  $s_{n,k}$
of  ${\mathcal S}_n$ having
 $k$ cycles is equal to the
coefficient of  $x^k$ in the polynomial~:

 $$ A_n(x) \ \ = x(x+1)(x+2)  \cdots (x+n-1)$$

\noindent
These numbers are the unsigned Stirling numbers of the first kind.

\begin{propo}
The number $c_{n,k}$ of indecomposable permutations
of $S_n, \ \ n>1$ with $k$ cycles
(or with $k$  left-to-right maxima)  is
given by each one of the following formulas~:
$$ 
c_{n,k}= s_{n,k} -\sum_{p=1}^{n-1} \sum_{i=1}^{min(k,p)}  c_{p,i}s_{n-p,k-i}
\ \ \ \ \ \ \ \ \ 
c_{n,k}= \sum_{p=1}^{n-1} \sum_{i=1}^{min(k,p)} p c_{p,i}s_{n-p-1,k-i}
$$

 where  $s_{m,j}$ is the number  of  permutations of  ${\mathcal S}_m$
with $j$ cycles .
\end{propo}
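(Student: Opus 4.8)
The plan is to prove both identities by first reducing to a single statistic and then handling the two formulas separately: the first combinatorially, the second by an algebraic passage through generating functions. By the Corollary to Proposition~1 I may count indecomposable permutations by their number of cycles throughout, since this equals the count by left-to-right maxima. I set $F(x,t)=\sum_{n\ge 0}A_n(x)\,t^n$ with $A_0(x)=1$ (so $A_n(x)=\sum_k s_{n,k}x^k$), and $G(x,t)=\sum_{n\ge 1}\big(\sum_k c_{n,k}x^k\big)t^n$.

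\textbf{First formula.} I would argue exactly as for the stated recurrence $c_n=n!-\sum c_p(n-p)!$, but keeping track of cycles. Every $\sigma\in\Sn$ factors uniquely as a concatenation $\sigma=\theta\beta$, where $\theta$ is the first indecomposable component, an indecomposable permutation of $\{1,\dots,p\}$ of some length $p\le n$, and $\beta$ is an arbitrary permutation of $\{p+1,\dots,n\}$. Since $\theta$ and $\beta$ act on disjoint intervals, the cycles of $\sigma$ are the disjoint union of those of $\theta$ and of $\beta$, so if $\theta$ has $i$ cycles and $\beta$ has $k-i$ cycles then $\sigma$ has $k$ cycles. Counting all $\sigma$ with $k$ cycles gives $s_{n,k}=\sum_{p=1}^{n}\sum_i c_{p,i}\,s_{n-p,k-i}$; isolating the indecomposable case $p=n$ (the term $c_{n,k}$) yields the first formula. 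The range $1\le i\le\min(k,p)$ is forced because $c_{p,i}=0$ unless $1\le i\le p$ and $s_{n-p,k-i}=0$ unless $i\le k$. Equivalently this identity reads $F-1=G\,F$, i.e. $F(1-G)=1$.

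\textbf{Second formula.} Here I would not try to refine the max-insertion construction of the Remark directly: although that construction does realise the unrefined identity $c_{n+1}=\sum p\,c_p(n-p)!$ bijectively, inserting the largest entry into the one-line word does not transport the cycle count term by term (for $\theta=2,3,1$ the three admissible insertions of $4$ produce permutations with $3,2,1$ cycles), so the equidistribution the formula asserts is not visible from the bijection alone. Instead I would extract it from the Stirling recurrence. From $A_n(x)=(x+n-1)A_{n-1}(x)$ one gets $t^2F_t=F-1-xt\,F$, where $F_t=\partial F/\partial t$. Differentiating $G=1-F^{-1}$ gives $G_t=F_t/F^2$, whence $t^2F_t=t^2F^2G_t$; combining with $t^2F_t=F-1-xtF=F\,(G-xt)$ and cancelling one factor $F$ yields $G-xt=t^2\,G_t\,F$. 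Reading off the coefficient of $x^k t^n$ on both sides (the subtracted $xt$ only removes the boundary term at $n=1$) gives precisely $c_{n,k}=\sum_{p=1}^{n-1}\sum_i p\,c_{p,i}\,s_{n-1-p,k-i}$ for $n>1$, with the same range of $i$.

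The main obstacle is exactly the second formula: the temptation is to prove it bijectively from the insertion construction, but the example above shows the statistic is not preserved slot by slot, so a purely combinatorial argument would require an extra equidistribution, which is what the Stirling recurrence supplies. Concretely the work is to verify the two relations $F(1-G)=1$ and $t^2F_t=F-1-xtF$, and to be careful with the boundary term $xt$ and the summation ranges; the substitution $G_t=F_t/F^2$ then does the rest. An alternative that avoids generating functions is to insert the Stirling recurrence $s_{m,j}=s_{m-1,j-1}+(m-1)s_{m-1,j}$ into the first formula and telescope, which reproduces the second; this is the same computation in disguise.
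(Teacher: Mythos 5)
Your proof is correct, but for the second formula it takes a genuinely different route from the paper, and your stated reason for abandoning the combinatorial route is mistaken. Your first-formula argument (unique factorization into the first indecomposable block times an arbitrary permutation of the remaining interval, with cycles splitting additively) is exactly the paper's. For the second formula you go through generating functions: $F(1-G)=1$, the relation $t^2F_t=F-1-xtF$ coming from $A_n(x)=(x+n-1)A_{n-1}(x)$, and the substitution $G_t=F_t/F^2$, giving $G-xt=t^2G_tF$; I have checked this derivation and the coefficient extraction (including the boundary term at $n=1$ and the summation ranges), and it is sound. The paper, however, proves the second formula directly and bijectively, and your objection --- that inserting $n$ into the \emph{one-line word} does not preserve the cycle count, as your example $\theta=2,3,1$ correctly shows --- only rules out word insertion, not all insertions. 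The paper inserts $n$ into a \emph{cycle} of the first indecomposable component $\theta$ (on $\{1,\dots,p\}$) of a permutation of ${\mathcal S}_{n-1}$ with $k$ cycles: there are exactly $p$ slots (one after each element of $\theta$), inserting into an existing cycle leaves the number of cycles equal to $k$, the result is indecomposable (any initial interval $[1,q]$ with $q<n$ closed under the new permutation must avoid the cycle containing $n$, hence would be a union of cycles of $\theta$ with $q<p$, impossible), and the map is inverted by deleting $n$ from its cycle, which is never a singleton by indecomposability; insertion into a cycle of the tail is excluded since it yields a decomposable permutation. So the equidistribution you thought required an extra argument is visible from the cycle-level bijection alone. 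What each approach buys: the paper's argument is self-contained and explains combinatorially \emph{why} the cycle count survives, while yours mechanizes the bookkeeping and makes transparent that the second identity is formally equivalent to the first one combined with the Stirling recurrence --- which is also why your closing remark about telescoping is the same computation in disguise.
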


 \begin{proof} \
The first formula follows from the observation that a decomposable
permutation of $\Sn$ with $k$ cycles is the concatenation 
of an indecomposable permutation of ${\mathcal S}_p$ 
with $i$ cycles and a permutation 
of ${\mathcal S}_{n-p}$ with $k-i$ cycles. 

For the second one, observe that the deletion of $n$ from its cycle 
in  an indecomposable  permutation $\alpha$  of $\Sn$ 
 with $k$  cycles gives a (possibly decomposable) permutation with $k$ cycles~;
 since if $n$ was alone in
its cycle then $\alpha$ would have been decomposable.
Conversely, let $\beta$ be any permutation with $k$ cycles written as
the concatenation of an indecomposable permutation $\theta$ on  $\{1,2,
\ldots , p\}$  (with $p \leq n$) and a permutation $\beta'$ on $\{p+1
\ldots , n-1\}$ having respectively $i$ and $k-i$ cycles. When inserting
$n$ in any cycle of $\theta$ one gets an indecomposable permutation
with $k$ cycles. The formula follows from the fact that there are exactly $p$
places where $n$ can be inserted, since inserting $n$ in a cycle of
$\beta'$ gives a decomposable permutation.
\end{proof}

Let $C_n(x) = \sum_{k=1}^{n-1} c_{n,k}x^k$; the equalities become
$$
C_n(x) = A_n(x) - \sum_{p=1}^{n-1} A_{n-p}(x)C_p(x) \ \ \ \ \ \ \ \ 
C_n(x) = \sum_{p=1}^{n-1} p A_{n-1-p}(x)C_p(x)$$
\medskip

{\bf Indecomposable Stirling numbers.}
The first values of the number of indecomposable permutations of $\Sn$
with $k$ cycles, for $2\leq n \leq 7$, are given in the table below; these
numbers might be called indecomposable Stirling numbers of the first kind
since they count
 indecomposable
permutations by their number of cycles.

\begin{center}
\begin{tabular}{cccccc}
1\\
2& 1\\
6& 6 & 1\\
24& 34 & 12 & 1\\
120 & 210 & 110 &20 & 1\\
720 & 1452 & 974 & 270 & 30 & 1
\end{tabular}
\end{center}

\section{Hypermaps}
In this section we recall some elementary facts about hypermaps,
state the main result of P. Ossona de Mendez and P. Rosesntiehl and
give a simplified proof of it.

\subsection{Definition}
Let $B$ be a finite set the  elements of it being  called {\em darts}.
In the sequel we will take $B= \{1, 2, \ldots , n\}$.

\begin{Definition}
A {\em hypermap} is given by a   pair of permutations
$(\sigma, \alpha)$, acting on  $B$
such that the group they generate is  transitive on $B$.
\end{Definition}

\medskip
The transitivity  condition can be translated in simple combinatorial terms,
remarking  that it is equivalent to the connectivity
of  the graph $G_{\sigma, \alpha}$  with vertex set $B$ and edge set~:
$$E = \bigcup_{b\in B} \{b,\alpha(b)\}  \bigcup_{b\in B} \{b,\sigma(b)\}$$

The cycles of $\sigma$ are the {\em vertices} of the hypermap
while the cycles of $\alpha$ are the {\em hyper-edges}.

An example of hypermap with 3 vertices and three hyper-edges
is given by~:

$$\sigma = (1, 2, 3) (4, 5, 6) (7, 8, 9) \ \ \ \ \alpha = (1, 6, 7)
(2, 5, 8) (3, 4, 9)$$

Hypermaps have been introduced, as a generalisation of combinatorial maps,
 for the representation of embeddings of
hypergraphs in surfaces, showing that  the cycles of
$\alpha^{-1}\sigma$
can be considered as representing the
faces, and defining a genus in a formula like Euler's one for maps.
In this paper we will not consider hypermaps as a topological embedding but as
 the very simple object consisting of a
 pair of permutations generating a
transitive subgroup of $\Sn$.

\subsection{Labeled, unlabeled and rooted hypermaps}
In enumerative combinatorics it is customary to consider labeled
objects and unlabeled ones.
Since in the above definition of
hypermaps we consider the elements of $B$ as distinguishable numbers,
they should be called  {\em labeled hypermaps}.

\smallskip

As an example, the number
of labeled hypermaps with 3 darts is 26, since among the 36 pairs of
permutations on $\{1, 2, 3\}$ there are 10 of them which do not generate a
transitive group.\ These are given by (where $\varepsilon$ is the identity
and $\tau_{i,j}$ the transposition exchanging $i$ and $j$)~:
\begin{itemize}
\item $\sigma = \varepsilon$ and $\alpha =\tau_{i,j}$ $i \neq j
  \in\{1,2,3\}$
 or $\alpha =
  \epsilon$  (4   pairs)
\item $\sigma = \tau_{i,j}$  and $\alpha  = \varepsilon$ or $\alpha = \sigma$
  (2   pairs for each of the 3 transpositions).
\end{itemize}
\
Two hypermaps $(\sigma, \alpha)$ and  $(\sigma', \alpha')$
are {\em isomorphic} if there exists a permutation $\phi$ such that~:
$$ \phi^{-1} \alpha \phi \ = \ \alpha',
 \ \ \ \ \ \ \  \phi^{-1} \sigma
\phi  \ = \  \sigma'.$$
The set of {\em unlabeled hypermaps} is the quotient of the set
of labeled ones by the isomorphism relation.
For instance the number of unlabeled hypermaps with 3 darts is 7.\ Representatives of the 7 isomorphism classes are given below:
$$
\begin{array}{|c|c|c|c|c|c|c|c|}
\hline
& H_1& H_2 & H_3& H_4 & H_5 & H_6 & H_7\\
\hline
\sigma&(1,2,3)& (1,2,3)& (1,2,3)& (1,2,3)& (1,2)(3)& (1)(2,3)& (1)(2)(3)\\
\hline
\alpha&(1,2,3)& (1,3,2)& (1,2)(3)& (1)(2)(3)& (1,2,3)& (1,3) (2)& (1,2,3)\\
\hline
\end{array}
$$
In general, the enumeration of unlabeled objects is difficult and the
formulas one obtains are  complicated.\ Thus,
intermediate objects are introduced: the  {\em rooted} ones, this is
done by selecting one element,  {\em the root},  in the object, and considering
isomorphisms which fix the root.
For hypermaps we select $n$ as the root, two labeled hypermaps $(\sigma, \alpha)$ and
$(\sigma', \alpha')$ being {\em isomorphic} as {\em rooted hypermaps}
if there exists $\phi$ such that~:
$$ \phi^{-1} \alpha \phi \  = \ \alpha', \ \ \ \  \phi^{-1}
  \sigma \phi \  = \ \sigma', \ \ \ \ \  \phi(n) = n$$

Such a $\phi$ will be called a {\em rooted isomorphism}.
There are 13 different rooted hypermaps with 3 darts, to the 7 above
 we have to add these below, which are isomorphic to one of the
 previous  ones but for
 which the isomorphism $\phi$  is not a rooted isomorphism.
\[
\begin{array}{|c|c|c|c|c|c|c|}
\hline
& H_8& H_9&  H_{10} & H_{11} & H_{12} & H_{13}\\
\hline
\sigma&(1,2,3) & (1,2,3)& (1)(2,3)& (1)(2,3)& (1,2)(3)& (1)(2,3)\\
\hline

\alpha&(1,3)(2)& (1)(2,3)& (1,3,2)& (1,2,3)& (1,3)(2)& (1,2) (3)\\
\hline
\end{array}
\]
In the sequel we will denote by $h_n$ the number of rooted hypermaps
with $n$ darts.
\begin{propo}
The number of labeled hypermaps with $n$ darts is equal to
$$(n-1)!h_n$$
\end{propo}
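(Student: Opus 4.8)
The plan is to exhibit the passage from rooted to labelled hypermaps as a free group action and then count by orbit size. Let $H$ denote the subgroup of $\Sn$ consisting of the permutations $\phi$ that fix the root, i.e. $\phi(n)=n$; clearly $|H|=(n-1)!$. The group $H$ acts on the set of labelled hypermaps with $n$ darts by conjugation,
\[
\phi\cdot(\sigma,\alpha)\ =\ (\phi^{-1}\sigma\phi,\ \phi^{-1}\alpha\phi),
\]
and this is well defined because conjugation preserves transitivity of $\langle\sigma,\alpha\rangle$. By the very definition of rooted isomorphism recalled above, two labelled hypermaps lie in the same $H$-orbit if and only if they are isomorphic as rooted hypermaps. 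Hence the number of orbits of this action is exactly $h_n$.

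The heart of the argument is to show that the action of $H$ is \emph{free}, i.e. that every labelled hypermap has trivial stabilizer. The stabilizer of $(\sigma,\alpha)$ consists of those $\phi\in H$ that commute with both $\sigma$ and $\alpha$; such a $\phi$ therefore commutes with every element of the group $G=\langle\sigma,\alpha\rangle$ generated by them. So I am reduced to the following elementary fact: if $G$ is a transitive subgroup of $\Sn$ and $\phi$ centralizes $G$ while fixing one point, then $\phi$ is the identity. To see this, suppose $\phi(b_0)=b_0$. For an arbitrary dart $b$, transitivity gives some $g\in G$ with $g(b_0)=b$, and then, using that $\phi$ commutes with $g$,
\[
\phi(b)\ =\ \phi(g(b_0))\ =\ g(\phi(b_0))\ =\ g(b_0)\ =\ b,
\]
so $\phi$ fixes every dart. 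Applying this with $b_0=n$ (the fixed point coming from $\phi\in H$) shows the stabilizer is trivial, which is precisely where the transitivity condition in the definition of a hypermap is used in an essential way.

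With freeness established, every $H$-orbit has size $|H|=(n-1)!$, so the set of labelled hypermaps is partitioned into $h_n$ orbits each of cardinality $(n-1)!$. Summing over orbits gives that the number of labelled hypermaps equals $(n-1)!\,h_n$, as claimed. I expect the only genuine obstacle to be the freeness of the action; once the centralizer lemma is in place the orbit count is immediate, and no case analysis on $\sigma$ or $\alpha$ is needed.
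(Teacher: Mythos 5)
Your proof is correct and takes essentially the same approach as the paper: both arguments reduce the count to showing that any $\phi$ with $\phi(n)=n$ commuting with $\sigma$ and $\alpha$ must be the identity, which follows from the transitivity of $\langle \sigma,\alpha\rangle$. Your explicit packaging as a free action of the point stabilizer with orbit counting, and your one-line centralizer computation $\phi(g(b_0))=g(\phi(b_0))$, are just slightly more formal versions of the paper's propagation of fixed points along $\sigma$ and $\alpha$.
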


\begin{proof}
Since there are $(n-1)!$ permutations $\phi$ such that $\phi(n) = n$,
 we only have
to prove that for a hypermap $(\sigma, \alpha)$ and a $\phi$ such
that $\phi(n) = n$ if
$$ \phi^{-1} \alpha \phi \  = \  \alpha, \ \ \ \  \phi^{-1} \sigma
\phi \  = \  \sigma$$
then $\phi$ is the identity. But this follows from the fact
 that for such an isomorphism $\phi$, $\phi(a) = a$ implies
$\phi(\sigma(a) ) = \sigma(a), \ \ \phi(\alpha(a)) =\alpha(a)$ and
from the transitivity of the group generated by  $\sigma$ and  $\alpha$.
\end{proof}

\subsection{Bijection}

 The following algorithm is a slightly modified version  of
 that of  P.  Ossona de Mendez, and P. Rosenstiehl (\cite{ossonaRosenstiehl1})~:

\subsubsection{Algorithm OMR}

 Let
$\theta= a_1,  a_1, a_2, \ldots , a_{n+1}$ be  an indecomposable permutation.\
A pair of permutations $(\sigma, \alpha)$
 is associated with $\theta$ through the following algorithm~:

\begin{itemize}

\item  Determine the  left-to-right maxima  of
 $\theta$, that is the
indices   $i_1, i_2, \ldots , i_k$ satisfying $j < i_p \Rightarrow a_j < a_{i_p}$.
Note that
$i_1 = 1\ \ $ and $ \ a_{i_k}= n+1$

\item  Let $\sigma_1$ be the permutation split into cycles as:

$$\sigma_1 \ \ = \ \  (1,2, \ldots,  i_2-1)(i_2, i_2+ 1, \ldots, i_3-1) \ldots
(i_k,  \ldots ,n+1)$$

\item  The  permutations $\alpha$ and  $\sigma$
are  obtained   from $\theta$ and $\sigma_1$, respectively, by deleting
$n+1$ from their cycles (observe that these cycles are  of length
not less than 2).
\end{itemize}

We will denote by $\Psi(\theta) $ the pair of permutations
 $(\sigma, \alpha)$
 obtained from $\theta$ by means of the algorithm OMR.

\medskip

\noindent
{\bf Example}
Consider the indecomposable permutation
$$ \theta = 6, 5, 7,  4, 2, 10, 3, 8, 9,  1$$
then the indexes of the left-to-right maxima
are $1, 3, 6$ giving
$$ \sigma_1 = (1, 2) (3, 4, 5) (6, 7, 8, 9, 10)$$
Since $\theta = (1, 6, 10) (2, 5) (3,7) (4) (8) (9)$
we have

$$\sigma = (1, 2)  (3, 4, 5) (6, 7, 8, 9) \ \ \ \ \alpha = (1, 6)
(2, 5)  (3, 7) (4) (8)  (9)$$

\medskip

\begin{Theorem}\label{thBij2}
 The above algorithm yields a  bijection $\Psi$ between the set of  indecomposable
 permutations
on ${\mathcal S}_{n+1}$ and the set of rooted hypermaps with darts
 $1,2, \ldots , n$.
Moreover for   $(\sigma, \alpha) = \Psi(\theta)$,
 $\alpha$ and  $\theta$ have the same  number of cycles  and
the number of cycles of $\sigma$ is equal to the number of
left-to-right maxima of $\theta$.
\end{Theorem}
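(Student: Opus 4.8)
The plan is to dispose of the two cycle-count assertions first, and then attack the harder claim that $\Psi$ is a bijection. Writing $N=n+1$, indecomposability of $\theta$ forces $a_N\ne N$ (otherwise $a_1,\ldots,a_{N-1}$ would be a permutation of $\{1,\ldots,N-1\}$) and forces the last left-to-right maximum to occur before the end, i.e. $i_k<N$ (again $a_N=N$ otherwise). Hence $N$ lies in a cycle of length at least two both in $\theta$ and in the last cycle $(i_k,\ldots,N)$ of $\sigma_1$, so deleting it leaves the number of cycles unchanged. This already gives that $\alpha$ has as many cycles as $\theta$, and that $\sigma$ has as many cycles as $\sigma_1$, namely $k$, the number of left-to-right maxima of $\theta$.

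For the bijection I would first show that $(\sigma,\alpha)=\Psi(\theta)$ is a genuine hypermap, i.e. that $G_{\sigma,\alpha}$ is connected. The clean reduction is to compare $G_{\sigma,\alpha}$ with the graph $G'$ on $\{1,\ldots,N\}$ built from $\sigma_1$ and $\theta$: passing from $(\sigma_1,\theta)$ to $(\sigma,\alpha)$ only deletes the vertex $N$ and re-links its former neighbours (contracting the cycles of $\sigma_1$ and of $\theta$ through $N$ creates the edges $\{n,i_k\}$ and $\{i_k,a_N\}$), so $G_{\sigma,\alpha}$ is connected iff $G'$ is. Thus everything reduces to the claim that $\langle\sigma_1,\theta\rangle$ is transitive on $\{1,\ldots,N\}$ iff $\theta$ is indecomposable. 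Here I would record the convenient criterion that $\theta$ is indecomposable exactly when $a_{i_j}\ge i_{j+1}$ for every $j<k$, which holds because $a_{i_j}=\max(a_1,\ldots,a_{i_{j+1}-1})\ge i_{j+1}-1$ always, with equality precisely at a decomposition point.

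The heart of the argument, and the step I expect to be the main obstacle, is this transitivity equivalence, which I would settle by analysing the subsets invariant under both $\sigma_1$ and $\theta$. A $\sigma_1$-invariant set is exactly a union of the intervals $I_j=\{i_j,\ldots,i_{j+1}-1\}$ (with $I_k=\{i_k,\ldots,N\}$), and $\theta$-invariance means $\theta(S)=S$. If $\theta$ is decomposable, the criterion yields some $j$ with $a_{i_j}=i_{j+1}-1$, whence $\{1,\ldots,i_{j+1}-1\}$ is a proper invariant set and $G'$ is disconnected. Conversely, suppose a proper nonempty invariant $S$ existed; both $S$ and its complement are unions of the $I_j$, so I may take the interval $I_j$ of largest index lying in whichever of the two classes does not contain the last interval $I_k$. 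Then all intervals above $I_j$ lie in the other class, so this class is contained in $\{1,\ldots,i_{j+1}-1\}$; since $i_j$ lies in it and the class is $\theta$-invariant, $a_{i_j}=\theta(i_j)\le i_{j+1}-1$, contradicting $a_{i_j}\ge i_{j+1}$. Hence indecomposability of $\theta$ forces $G'$, and therefore $G_{\sigma,\alpha}$, to be connected.

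Finally I would establish bijectivity by exhibiting the inverse. Given a pair $(\sigma,\alpha)$ whose cycles of $\sigma$ are consecutive increasing intervals, their starting points recover the positions $i_1<\cdots<i_k$ and hence $\sigma_1$ (append $N$ to the last cycle); taking $i_k$ to be the start of the cycle of $\sigma$ containing $n$ and inserting $N$ immediately after $i_k$ in its cycle of $\alpha$ recovers a permutation $\theta$, which one checks is indecomposable and satisfies $\Psi(\theta)=(\sigma,\alpha)$, these operations being mutually inverse. This makes $\Psi$ a bijection onto the pairs with $\sigma$ in this canonical interval form. To identify that image with the set of rooted hypermaps on $\{1,\ldots,n\}$, I would argue that each rooted isomorphism class contains exactly one representative of this canonical shape, obtained by the root-based traversal underlying algorithm OMR, with uniqueness resting on the triviality of rooted automorphisms already proved; the canonical pairs then form a transversal of the rooted classes. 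This last identification is the only point where I would lean on the detailed verification of \cite{ossonaRosenstiehl1}. Combined with the statistics of the first paragraph, this yields the theorem.
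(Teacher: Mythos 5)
Your opening steps are sound: the cycle-count statistics (via $a_N\neq N$ and $i_k<N$, so that $N$ sits in cycles of length at least two in both $\theta$ and $\sigma_1$) are handled correctly, and your invariant-set proof that $\langle\sigma_1,\theta\rangle$ is transitive iff $\theta$ is indecomposable is a legitimate variant of the paper's argument (both rest on the criterion $a_{i_j}\geq i_{j+1}$). The genuine gap is in your bijectivity step. Your claimed inverse, and with it the claim that $\Psi$ is a bijection onto \emph{all} transitive pairs whose $\sigma$ has canonical interval form, is false: the image of $\Psi$ is strictly smaller, which is exactly why the paper needs the second condition of Lemma~\ref{lem1} (the right-to-left minima of $\alpha^{-1}$ must be $i_1,\dots,i_{k-1}$ together with a subset of the last interval). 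Concretely, take $n=3$, $\sigma=(1)(2)(3)$, $\alpha=(1,3,2)$: this pair is transitive and $\sigma$ is canonical, but inserting $4$ after $i_k=3$ in the cycle of $\alpha$ gives $\theta=(1,3,4,2)$, i.e.\ the sequence $3,1,4,2$, whose left-to-right maxima sit at positions $1$ and $3$; hence $\Psi(\theta)=\bigl((1,2)(3),\,(1,3,2)\bigr)\neq(\sigma,\alpha)$, and checking the three possible insertions of $4$ into the cycle of $\alpha$ shows that $(\sigma,\alpha)$ is not in the image of $\Psi$ at all. For $\Psi(\theta)=(\sigma,\alpha)$ to hold, the re-inserted permutation must have its left-to-right maxima exactly at the cycle-starts $i_1,\dots,i_k$, and canonical interval form alone does not guarantee this.

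The same example also destroys your final identification: the claim that each rooted isomorphism class contains exactly one canonical-form representative is false, since conjugation by $\phi=(1,2)$ (which fixes the root $3$) carries $\bigl((1)(2)(3),(1,2,3)\bigr)$ to $\bigl((1)(2)(3),(1,3,2)\bigr)$ --- two distinct canonical-form pairs in the same rooted class, of which only the first lies in the image of $\Psi$. So both sides of your concluding correspondence are mischaracterized, and the two errors do not cancel. Moreover, the one step you delegate to the cited reference is precisely the substance of the theorem, and the paper does not delegate it: it proves injectivity by showing that any rooted isomorphism between two images of $\Psi$ is forced to be the identity (identifying $i_k$, then $i_{k-1}$, and so on, using Lemma~\ref{lem1}), and surjectivity by an explicit traversal that rewrites the cycles of $\sigma$ in an order dictated by $\alpha^{-1}$, producing in each rooted class the unique representative satisfying \emph{both} conditions of Lemma~\ref{lem1}. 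To repair your proof you would need to replace ``canonical interval form'' by the full characterization of Lemma~\ref{lem1}, verify that your insertion map is inverse to $\Psi$ on that smaller set, and then supply the two arguments just described.
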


The key point in the proof of this Theorem is the following characterization
of the smallest elements of the cycles of the permutation $\sigma$ given by
the algorithm:

\begin{Lemma}\label{lem1}
A hypermap $(\sigma, \alpha)$ is such that there exists an indecomposable 
permutation $\theta$ satisfying 
$$ \Psi(\theta) = (\sigma, \alpha)$$
if and only if 
\begin{itemize}
\item The permutation $\sigma$ has cycles consisting of consecutive
  integers in increasing order~:
$$\sigma \ \ = \ \ (1, 2, \ldots , i_2 -1) (i_2, i_2+1, \ldots , i_3 -1)
\ldots (i_k, i_k+1, \ldots , n) $$
\item $\alpha$ is such that the right-to-left minima of $\alpha^{-1}$
  are $i_1, i_2, \ldots , i_{k-1}$ and a (possibly empty) subset of the
interval $[i_k, i_k+1, \ldots,  n]$.
\end{itemize}

\end{Lemma}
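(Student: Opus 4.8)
The plan is to prove both implications by making the passage between the one-line (sequence) form of $\theta$ and the pair $(\sigma,\alpha)$ completely explicit, and then reading off left-to-right maxima and indecomposability from that form. Two elementary facts drive everything. First, if $i_1=1<i_2<\cdots<i_k$ are the positions of the left-to-right maxima of $\theta$ (so $a_{i_k}=n+1$), then, as a sequence, $\theta$ is obtained from the word $\alpha(1)\,\alpha(2)\cdots\alpha(n)$ by replacing the entry $\alpha(i_k)$ in position $i_k$ with $n+1$ and appending the displaced value $\alpha(i_k)$ at the end; equivalently $\theta(i_k)=n+1$, $\theta(n+1)=\alpha(i_k)$ and $\theta(j)=\alpha(j)$ otherwise. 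Second, I will use the duality that the left-to-right maxima (read as positions) of $\alpha$ are exactly the right-to-left minima (read as values) of $\alpha^{-1}$: indeed $j$ satisfies $\alpha(w)<\alpha(j)$ for all $w<j$ iff in $\alpha^{-1}$ every value smaller than $j$ occurs to the left of $j$, which is precisely the right-to-left minimum condition for the value $j$. This is the reformulation that turns the condition on $\alpha^{-1}$ in the statement into a condition on the left-to-right maxima of $\alpha$.

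For the necessity direction, suppose $(\sigma,\alpha)=\Psi(\theta)$ with $\theta$ indecomposable. The condition on $\sigma$ is immediate from the construction: $\sigma_1$ consists by definition of increasing runs of consecutive integers with left endpoints $i_1,\dots,i_k$, and since $n+1$ is the last entry of its last cycle $(i_k,\dots,n+1)$, deleting it leaves $\sigma=(1,\dots,i_2-1)\cdots(i_k,\dots,n)$ of the required shape. For $\alpha$, I compare the left-to-right maxima of $\theta$ and of $\alpha$ using the sequence form above: positions $1,\dots,i_k-1$ carry the same values in $\theta$ and in $\alpha$, while every position after $i_k$ carries a value smaller than $n+1=a_{i_k}$ and so is never a left-to-right maximum of $\theta$. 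Hence the left-to-right maxima of $\alpha$ lying below $i_k$ are exactly $i_1,\dots,i_{k-1}$ and all remaining ones lie in $[i_k,n]$; by the duality this says precisely that the right-to-left minima of $\alpha^{-1}$ are $i_1,\dots,i_{k-1}$ together with a subset of $[i_k,\dots,n]$.

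For the sufficiency direction, given a hypermap $(\sigma,\alpha)$ satisfying the two conditions I read $i_1,\dots,i_k$ off the cycles of $\sigma$ (so $i_k$ is the least element of the cycle containing $n$) and define $\theta$ by $\theta(i_k)=n+1$, $\theta(n+1)=\alpha(i_k)$, $\theta(j)=\alpha(j)$ otherwise. The same comparison of words shows that the left-to-right maxima of $\theta$ are exactly $i_1,\dots,i_k$ (using the duality to convert the hypothesis on $\alpha^{-1}$ into ``the left-to-right maxima of $\alpha$ below $i_k$ are $i_1,\dots,i_{k-1}$''), whence Algorithm OMR applied to $\theta$ rebuilds exactly $\sigma_1$, and deleting $n+1$ returns $(\sigma,\alpha)$; thus $\Psi(\theta)=(\sigma,\alpha)$. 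It remains to check that $\theta$ is indecomposable: any prefix of length $p\ge i_k$ contains the entry $n+1>p$ in position $i_k$ and so is never an initial interval, while for $p<i_k$ the prefix of $\theta$ equals $\alpha(1),\dots,\alpha(p)$, so a forbidden cut would force $\{1,\dots,p\}$ to be $\alpha$-invariant.

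The main obstacle is precisely this last point, and it is where the hypermap hypothesis (transitivity) must enter. I will argue that if $\{1,\dots,p\}$ is $\alpha$-invariant with $p<i_k$ then $\alpha(p+1)>p\ge\alpha(w)$ for all $w\le p$, so $p+1$ is a left-to-right maximum of $\alpha$; by the condition on $\alpha$ this forces $p+1\in\{i_1,\dots,i_k\}$, i.e.\ $p+1$ is the left endpoint of a cycle of $\sigma$, so that $\{1,\dots,p\}$ is also $\sigma$-invariant. Then $\{1,\dots,p\}$ with $0<p<n$ would be invariant under both $\sigma$ and $\alpha$, contradicting the transitivity of the group they generate; hence no such $p$ exists and $\theta$ is indecomposable. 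The delicate bookkeeping is thus confined to the interface between the three descriptions — the word $\theta$, the left-to-right maxima of $\alpha$, and the right-to-left minima of $\alpha^{-1}$ — with the duality the single fact that makes all three match up.
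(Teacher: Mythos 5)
Your proof is correct, and its core is the same as the paper's: the identical one-line construction of $\theta$ from $\alpha$ (put $n+1$ in position $i_k$ and append the displaced value $\alpha(i_k)$ at the end), and the identical duality between positions of left-to-right maxima of a permutation and right-to-left minima (as values) of its inverse. Where you genuinely go beyond the paper is your final paragraph: the paper's proof of the ``if'' direction stops after asserting that the left-to-right maxima of the constructed $\theta$ are $i_1,\ldots,i_k$, and never verifies that $\theta$ is indecomposable, even though indecomposability is part of the statement being proved. This check is not vacuous, and it is precisely where the transitivity hypothesis must enter: without it the lemma is false. For instance $\sigma=(1)(2)$, $\alpha=(1)(2)$ satisfies both bullets (the right-to-left minima of $\alpha^{-1}=1,2$ are $\{1,2\}=\{i_1\}\cup\{2\}$, and $\{2\}\subseteq[i_k,n]=[2,2]$), yet the construction produces $\theta=1,3,2$, which is decomposable --- consistent with the fact that this pair is not transitive, hence not a hypermap. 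Your argument closing the gap --- an $\alpha$-invariant initial segment $\{1,\ldots,p\}$ with $p<i_k$ forces $p+1$ to be a left-to-right maximum of $\alpha$, hence (by the second bullet and the duality) the smallest element of a cycle of $\sigma$, so that $\{1,\ldots,p\}$ is invariant under both $\sigma$ and $\alpha$, contradicting transitivity --- is exactly the missing step and is correct. In short: same route as the paper, but your version is the complete one.
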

\begin{proof}
Suppose that $(\sigma, \alpha)$ satisfy the conditions above, consider the notation of $\alpha \ \ = \ \ a_1, a_2, \ldots , a_n$ as a sequence and 
let 

$$ \theta \ \ = \ \  a_1, a_2, \ldots a_{{i_k}-1}, n+1, a_{{i_k}+1}, \ldots , a_n,  a_{{i_k}}$$

Then $\theta(i_k) = n+1$ and $\theta(n+1) \ = \ \alpha(i_k)$,
 hence the indexes of the
left-to-right maxima of $\theta$ are $i_1, i_2, \ldots , i_k$ 
giving $\Psi(\theta) =
(\sigma, \alpha)$.

\smallskip

Conversely let $\theta$ be an indecomposable permutation and let 
$= \Psi(\theta)=(\sigma, \alpha) $, then~:
\begin{enumerate}
\item By the definition of $\Psi$ the cycle of $\sigma$ containing $n$
  is $(i_k, \ldots,  n)$.
\item  For any permutation $\theta$,
the indexes $i_1, i_2, \ldots, i_k$ of the left-to-right maxima are exactly the
right-to-left minima of $\theta^{-1}$.  Deleting $n+1$ from its
cycle in $\theta$ in order to obtain $\alpha$ has the following effect
on the sequence $b_1, b_2, \ldots , b_{n+1}$ representing
$\theta^{-1}$: $b_i=n+1$ is replaced by $b_{n+1}$. Clearly,
$i_1, i_2, \ldots i_{k-1}$,  are still right-to-left minima  in
the sequence obtained  by this transformation
since they are smaller than $b_{n+1}$.
\end{enumerate} \end{proof}

\medskip

\noindent
\subsubsection{Proof of  Theorem \ref{thBij2}}.
\begin{enumerate}
\item
The   pair of permutations $(\sigma, \alpha)$
defines a hypermap.

Consider a cycle $(i_p, i_p + 1, \ldots , i_{p+1} -1)$ of $\sigma$ with
$p<k$, then $\alpha(i_p)= \theta(i_p)$ is a left-to-right maximum of $\theta$
and the next one is $\alpha(i_{p+1})$, hence $\theta(i) < \theta(i_p)$ 
for $i_p< i < i_{p+1}$ and for $i < i_p$. Since $\theta$ is indecomposable this 
implies $\theta(i_p) = \alpha(i_p) \geq i_{p+1}$. We have thus observed that for
any cycle $\Gamma_p$ of $\sigma$, which does not contain $n$, there is an element (namely $i_p$)
of it such that $\alpha(i_p)$ is in another cycle of $\sigma$ which smallest element 
is greater than  the smallest element of $\Gamma_p$; this observation clearly implies
the transitivity of the group generated by $(\sigma, \alpha)$.

\item Let  $\theta$ and $\theta'$ two different indecomposable permutations
then $\Psi(\theta)$ and $\Psi(\theta')$ are non isomorphic as rooted
hypermaps.

\begin{itemize}

\item Suppose that there exists a rooted isomorphism $\phi$ between
$\Psi(\theta) = (\sigma, \alpha) $ and $\Psi(\theta') = (\sigma',
\alpha')$, then the cycles of $\sigma$ and $\sigma'$ containing $n$
have the same length, since $\phi(n) = n$ and $\sigma' = \phi^{-1}\sigma \phi$ implies~:
$$ \sigma^i(n) \ \  =\ \  n \ \ \ \Leftrightarrow \ \ \  \sigma'^i(n) \ \ = \ \ n $$
\item Hence $i_k$ and $i'_{k'}$ the smallest elements of the cycles
of $\sigma$ and $\sigma'$ containing $n$ are equal, and
$\phi(i) = i$ for all $i_k \leq i \leq n$.
\item By  Lemma \ref{lem1}, $i_{k-1}$ is equal to $\alpha^{-1}(j)$ for
  the maximal $j$ in $[i_k, \ldots , n]$ such that
$\alpha^{-1}(j) \notin [i_k, \ldots , n]$. But since $\phi(i) = i$ for
$i \in [i_k, \ldots, n]$ we have:
$$ \alpha^{-1}(\ell) \notin [i_k, \ldots , n] \ \ \Leftrightarrow \ \
\alpha'^{-1}(\ell) \notin [i_k, \ldots , n]$$
Hence $\alpha'^{-1}(j) \notin [i_k, \ldots , n]$, and 
$i'_{k-1} = \alpha'^{-1}(j)$. Moreover the cycles
of $\sigma$ and $\sigma'$ containing respectively
 $i_{k-1} $ and $i'_{k'-1}$ have the same length, giving
$i_{k-1} = i'_{k'-1}$ and $\phi(i) = i$ for all $i_{k-1} \leq i \leq n$
\item  By repeating the above argument for all the $i_p$ we
  conclude that $\phi$ is the identity.
\end{itemize}

\item For any hypermap $(\sigma, \alpha)$ there exists an
  indecomposable permutation $\theta$ such that  $(\sigma, \alpha)$
and  $\Psi(\theta)$ are  isomorphic as rooted
hypermaps.

It suffices to show that there exists an
isomorphism $\phi$ such that the hypermap  $(\sigma', \alpha') = (\phi^{-1}\sigma \phi, \phi^{-1}\alpha \phi)$  satisfies the conditions of Lemma \ref{lem1}.
We have to find among all the conjugates of $\sigma$ one in which the cycles
consist of consecutive integers and such that the smallest elements
 in each cycle 
are the right-to-left minima of the conjugate of $\alpha^{-1}$. 
For that we  write down the cycles $\Gamma_1, \Gamma_2, \ldots , \Gamma_k$
of $\sigma$ in a specific order, then we will 
write $\sigma'$ (having the same numbers of cycles of each length as $\sigma$
and with cycles consisting of consecutive numbers in increasing order) above
$\sigma$
in such a way that cycles of the same length
 correspond. The automorphism $\phi$ is then
 obtained by the classical construction
for conjugates of a permutation (see for instance \cite{Rotman} chapter 3).

Since $\phi(n) = n$, $\Gamma_k=   (z_{j_1}, z_{j_2}, \ldots, z_{j_k} )$
 should be the cycle of $\sigma$ containing $n$ and 
it has to be written such that $z_{j_k}= n $.
In order to find which cycle is $\Gamma_{k-1}$ we use Lemma \ref{lem1}:
 the first element
of this cycle should correspond to a right-to-left minima of $\alpha'^{-1}$,
hence this element is the first among 
 $\alpha^{-1}(z_{j_k}), \alpha^{-1}(z_{j_{k-1}}) \ldots
\alpha^{-1}(z_{j_1})$ which is not in $\Gamma_k$, such element exists 
by the transitivity of the group generated by $\{\sigma, \alpha\}$.
 We  continue by computing
 the image under $\alpha^{-1}$ of the elements already
written down, taken from right to left, when an element
 $\alpha^{-1}(z_i) = u_1$, not written down is obtained, the
whole cycle of $\sigma$ containing $u_1$ is written with $u_1$ at the
beginning of it. The algorithm terminates when all $\{1,2 , \ldots , n\}$ are
obtained, and this termination is also a consequence of  the
 transitivity of the   group generated by $\{\sigma, \alpha\}$.

To end we write $\sigma'$ above $\sigma$ in such a way that
the elements $1, 2, \ldots , n$ appear in that order with the lengths
of cycles corresponding to those of $\Gamma_1, \Gamma_2, \ldots , \Gamma_k$,
 and the isomorphism
$\phi$ is determined. Then $\alpha'$ is obtained by  
$\alpha' = \phi^{-1}\alpha \phi$ \hfil $\Box$
\end{enumerate}

\subsubsection{Example}
We give an example of a hypermap $H= (\sigma, \alpha)$
and the  computation of the hypermap $H' = (\sigma', \alpha')$
such that $H$ and $H'$ are isomorphic as rooted hypermaps and
there exists $\theta$ satisfying $\Psi(\theta') = (\sigma', \alpha')$.
We take for this example, 
the hypermap obtained by exchanging vertices and hyper-edges
in the hypermap considered above:

$$\sigma  = (1, 6) (2, 5)  (3, 7) (4) (8)  (9) \ \ \
\alpha = (1, 2)  (3, 4, 5) (6, 7, 8, 9). \ \ \ \ $$
We begin the list of cycles by $\Gamma_6 = (9)$, then 
since $\alpha^{-1} (9) = 8$, after two steps  the list consist of
$$ (8) (9)$$
Now since $\alpha^{-1} (8) = 7$, the list grows,
$$(7, 3)(8) (9)$$
then $\alpha^{-1} (3) = 5$ gives:
$$  (5, 2) (7, 3)(8) (9)$$
Then, since  $\alpha^{-1} (7) = 6$:
$$ (6,1) (5, 2) (7, 3) (8) (9)$$
We end by  $\alpha^{-1} (2) = 1, \alpha^{-1} (5) = 4$ and obtain finally;
$$(4)  (6,1) (5, 2) (7, 3) (8) (9).$$
Aligning with
$$(1) (2,3) (4,5) (6,7)(8)(9)$$
we obtain $\phi\ = \ 4,6,1,5,2,7,3,8,9$ then we have~:
$$\sigma' = \phi^{-1} \sigma \phi = 
 (1)(2,3) (4,5) (6,7)(8)(9) \ \ \ \ \alpha'= \phi^{-1} \alpha \phi =  (3,5) (7,1,4) (2,6,8,9)$$
To obtain $\theta'$ we remark that the last cycle of $\sigma$ is of
length 1, hence the position of $10$ in the
sequence representing $\theta$  should be one place before the
end, giving from $\alpha' = 4, 6, 5, 7, 3, 8, 1, 9 , 2$:
$$ \theta' =  4, 6, 5, 7, 3, 8,
1, 9, 10, 2$$

\subsection{Main results}

\begin{coro}
The number of rooted hypermaps with $n$ darts is equal to $c_{n+1}$,
the number of those with $n$ darts and $k$ vertices is $c_{n+1,k}$.
\end{coro}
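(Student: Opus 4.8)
The plan is to derive this Corollary directly from Theorem~\ref{thBij2}, which has just been established. The key observation is that Theorem~\ref{thBij2} provides a bijection $\Psi$ between indecomposable permutations on $\mathcal{S}_{n+1}$ and rooted hypermaps with darts $1,2,\ldots,n$. Since bijections preserve cardinality, the number of rooted hypermaps with $n$ darts equals the number of indecomposable permutations of $\mathcal{S}_{n+1}$, which is by definition $c_{n+1}$. This yields the first assertion immediately.

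For the refined statement, I would invoke the parameter-preservation property built into Theorem~\ref{thBij2}: for $(\sigma,\alpha)=\Psi(\theta)$, the number of cycles of $\sigma$ equals the number of left-to-right maxima of $\theta$. Recall that the vertices of a hypermap are precisely the cycles of $\sigma$. Hence the bijection $\Psi$ restricts to a bijection between indecomposable permutations of $\mathcal{S}_{n+1}$ with exactly $k$ left-to-right maxima and rooted hypermaps with $n$ darts and exactly $k$ vertices. Therefore the number of the latter equals the number of indecomposable permutations of $\mathcal{S}_{n+1}$ with $k$ left-to-right maxima.

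The final step is to identify this count with $c_{n+1,k}$. Here I would appeal to the Corollary to Proposition~1 (the first fundamental transform), which asserts that the number of indecomposable permutations with $k$ cycles equals the number of indecomposable permutations with $k$ left-to-right maxima. By Proposition~2, this common value is denoted $c_{n+1,k}$. Chaining these equalities gives that the number of rooted hypermaps with $n$ darts and $k$ vertices is $c_{n+1,k}$, completing the proof.

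I do not anticipate a serious obstacle here, as the Corollary is essentially a restatement of Theorem~\ref{thBij2} combined with the earlier equidistribution result. The only point requiring mild care is bookkeeping the indexing shift between $n$ darts and $\mathcal{S}_{n+1}$, and ensuring the left-to-right maxima statistic on the permutation side is correctly matched both to the vertex count (via $\Psi$) and to the cycle count (via the first fundamental transform) so that the symbol $c_{n+1,k}$ is applied consistently.
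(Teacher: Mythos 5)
Your proposal is correct and follows exactly the route the paper intends: the corollary is an immediate consequence of Theorem~\ref{thBij2} (the bijection $\Psi$ with its vertex/left-to-right-maxima correspondence), combined with the fact, established via the first fundamental transform, that $c_{n+1,k}$ counts indecomposable permutations of ${\mathcal S}_{n+1}$ equally well by cycles or by left-to-right maxima. The paper leaves this chaining implicit; you have simply spelled it out, including the correct handling of the index shift from $n$ darts to ${\mathcal S}_{n+1}$.
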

We also obtain another proof of a result of J. D. Dixon \cite{dixon2}.
\begin{coro}
The probability $t_n$ that a pair of permutations randomly chosen among the
the permutations in $\Sn$ generates a
transitive group is~:

$$p_n = 1 -\frac{1}{n} -\frac{1}{n^2} -\frac{4}{n^3} -\frac{23}{n^4}
-\frac{171}{n^5} -\frac{11542}{n^6}  -\frac{16241}{n^7}
-\frac{194973}{n^8} +O(\frac{1}{n^9})$$

\end{coro}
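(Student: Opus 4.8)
The plan is to first convert the probability into an exact enumerative ratio, and then feed the recurrence for indecomposable permutations into an asymptotic analysis. A pair $(\sigma,\alpha)\in\Sn\times\Sn$ generates a transitive group exactly when it is a labelled hypermap with $n$ darts, and there are $(n!)^2$ pairs in all. By the proposition expressing the number of labelled hypermaps with $n$ darts as $(n-1)!\,h_n$, together with the preceding corollary $h_n=c_{n+1}$, this count is $(n-1)!\,c_{n+1}$, whence
$$t_n \ = \ \frac{(n-1)!\,c_{n+1}}{(n!)^2} \ = \ \frac{c_{n+1}}{n\cdot n!}.$$
As a check, for $n=3$ this is $c_4/(3\cdot 3!)=13/18=26/36$, agreeing with the $26$ labelled hypermaps on $3$ darts counted above.

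It remains to expand $c_{n+1}/(n\cdot n!)$ in powers of $1/n$. For this I would use the recurrence $n!=\sum_{p=1}^{n}c_p(n-p)!$ (equivalently the identity $1-C(z)=1/\sum_{m\ge 0}m!\,z^m$). Writing $r_n=c_n/n!$, so that $t_n=\frac{n+1}{n}\,r_{n+1}$, and setting $q=n-p$, the recurrence becomes the self-referential identity
$$1-r_n \ = \ \sum_{q=1}^{n-1}\frac{r_{n-q}}{{n\choose q}}.$$
Since ${n\choose q}^{-1}\sim q!/n^{q}$, the $q$-th summand is of order $n^{-q}$; hence for any fixed $K$ only the two ends of the sum are relevant. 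At the head $q=1,\dots,K$ one has $r_{n-q}\to 1$, and the already-computed lower-order coefficients of $r$ feed back; at the tail $p=1,\dots,K$ (that is $q=n-p$) the summand equals $c_p/(n)_p$, with $(n)_p=n(n-1)\cdots(n-p+1)$, and contributes the explicit constants $c_1,c_2,\dots$; the middle block $K<q<n-K$ is $O(n^{-K-1})$ and is discarded. Both $q=1$ and $p=1$ contribute $1/n$, so $r_n=1-2/n+O(n^{-2})$, and multiplying by $(n+1)/n$ after the shift $n\mapsto n+1$ already gives $t_n=1-1/n+O(n^{-2})$; iterating the scheme produces the remaining coefficients.

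The main obstacle is precisely this two-ended, self-consistent structure: because both extremities of the convolution contribute at every order and the head terms reintroduce $r$ at shifted arguments, the expansion is not a straightforward bootstrap in $1/n$ but must be solved recursively, order by order. Each coefficient emerges as a finite rational combination of the values $c_p$, themselves fixed by the recurrence, so in principle the computation is purely mechanical; pushing it to order $n^{-8}$, however, is lengthy and most naturally carried out by machine, and the displayed numerators are the outcome of that computation.
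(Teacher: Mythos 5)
Your reduction to the exact ratio is precisely the paper's: you invoke the proposition that labelled hypermaps number $(n-1)!\,h_n$ and the corollary $h_n=c_{n+1}$ to get $t_n=c_{n+1}/(n\cdot n!)$, exactly as the paper does. Where you diverge is the asymptotic step. The paper simply cites Comtet's published expansion $\frac{c_n}{n!}=1-\frac{2}{n}-\frac{1}{(n)_2}-\frac{1}{(n)_3}-\frac{19}{(n)_4}-\cdots$ (in inverse falling factorials) and substitutes $n+1$ for $n$, whereas you re-derive such an expansion from scratch out of the convolution identity $1-r_n=\sum_{q=1}^{n-1} r_{n-q}\big/{n\choose q}$ with $r_n=c_n/n!$, which is indeed equivalent to the paper's recurrence $n!=\sum_{p=1}^{n}c_p(n-p)!$. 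Your two-ended analysis is sound: the head terms $q\le K$ contribute $q!/n^{q}$ times already-known coefficients of $r$, the tail terms $p=n-q\le K$ contribute the explicit quantities $c_p/(n)_p$, and the middle block is indeed $O(n^{-K-1})$ (its two extreme terms are $O(n^{-K-1})$ and the remaining at most $n$ terms are each $O(n^{-K-2})$); iterating this order by order is a valid induction proving that the expansion exists to all orders and that each coefficient is a finite rational combination of the $c_p$. In essence you have inlined a proof of Comtet's lemma, so your argument is self-contained where the paper's rests on a citation. The trade-off: the paper's route certifies the numerical coefficients by reference, while yours establishes existence and computability of the expansion but explicitly verifies only the leading term $1-1/n$, deferring the numerators through $n^{-8}$ to an unperformed (if mechanical) computation. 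That deferral is arguably no worse than the paper's own, since the paper likewise never exhibits the arithmetic converting Comtet's falling-factorial coefficients into the displayed powers of $1/n$; but be aware that a referee asking for the stated constants would require you (or your machine) to actually run the bootstrap to eighth order.
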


\begin{proof}
We have seen that the number of labeled hypermaps with $n$ darts is $(n-1)!c_{n+1}$;
hence the probability $t_n$ is
$$\frac{(n-1)!c_{n+1}}{n!n!} \ \ = \ \ \frac{c_{n+1}}{nn!}$$
In \cite{comtetN} L. Comtet proves that the number of indecomposable
permutations $c_n$ of $\Sn$  satisfies
$$ \frac{c_n}{n!} \ \ = \ \  1 -\frac{2}{n}
 -\frac{1}{(n)_2}  -\frac{1}{(n)_3}  -\frac{19}{(n)_4}  -\frac{110}{(n)_5}
-\frac{745}{(n)_6}  -\frac{5752}{(n)_7}  -\frac{49775}{(n)_8} + O(\frac{1}{476994})
$$
where $(n)_k = n(n-1) \ldots (n-k+1)$.\ Replacing $n$ by $n+1$ gives the result.

\end{proof}

\noindent
The following theorem answers positively a conjecture of Guo-Niu Han and
D.  Foata. \footnote{Personal communication.}
\begin{Theorem}
The number of permutations of $\Sn$ with $p$ cycles and $q$ left-to-right
 maxima is equal to the number of permutations of $\Sn$ with $q$
cycles and $p$ left-to-right maxima.
\end{Theorem}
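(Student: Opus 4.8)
The plan is to reduce the assertion for all of $\mathcal{S}_n$ to the same assertion restricted to \emph{indecomposable} permutations, which carries the genuine content, and then to recover the full statement from the indecomposable one through the canonical factorisation of a permutation into indecomposable blocks.

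First I would record the symmetry on indecomposable permutations. Write $c_{n,p,q}$ for the number of indecomposable permutations of $\mathcal{S}_n$ with $p$ cycles and $q$ left-to-right maxima. By Theorem \ref{thBij2}, $\alpha$ and $\theta$ have the same number of cycles while the cycles of $\sigma$ correspond to the left-to-right maxima of $\theta$, so $c_{n+1,p,q}$ equals the number of rooted hypermaps on $n$ darts with $p$ hyper-edges (cycles of $\alpha$) and $q$ vertices (cycles of $\sigma$). Now the map $(\sigma,\alpha)\mapsto(\alpha,\sigma)$ is an involution on labeled hypermaps: the transitivity condition is symmetric in $\sigma$ and $\alpha$, and the involution exchanges the number of vertices with the number of hyper-edges. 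Since the proof of Proposition 3 shows that every rooted hypermap class contains exactly $(n-1)!$ labeled representatives, uniformly and without reference to the cycle types, the count of rooted hypermaps with given numbers of vertices and hyper-edges is $1/(n-1)!$ times the corresponding labeled count (both invariants being conjugation-invariant). Hence the swap descends to rooted hypermaps and gives $c_{n+1,p,q}=c_{n+1,q,p}$, the desired symmetry for indecomposable permutations.

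Next I would pass to arbitrary permutations. Every $\theta\in\mathcal{S}_n$ with $n\ge 1$ factors uniquely as a concatenation $\theta=\theta_1\theta_2\cdots\theta_m$ of indecomposable permutations, where $\theta_i$ acts on a consecutive interval and these intervals partition $\{1,\dots,n\}$ from left to right (take the smallest $p\ge 1$ with $\theta(\{1,\dots,p\})=\{1,\dots,p\}$, then recurse on the remainder). Both statistics are additive along this decomposition: the cycles of $\theta$ are exactly the union of the cycles of the blocks, so the cycle count adds; and since every value in a later block exceeds every value in an earlier one, an entry is a left-to-right maximum of $\theta$ if and only if it is a left-to-right maximum within its own block, so the left-to-right-maxima count adds as well. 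Writing $a_{n,p,q}$ for the number of permutations of $\mathcal{S}_n$ with $p$ cycles and $q$ left-to-right maxima, additivity yields
$$a_{n,p,q}=\sum c_{n_1,p_1,q_1}c_{n_2,p_2,q_2}\cdots c_{n_m,p_m,q_m},$$
the sum ranging over all $m\ge 1$ and all compositions $n=n_1+\cdots+n_m$, $p=p_1+\cdots+p_m$, $q=q_1+\cdots+q_m$ with each $n_i\ge 1$. Equivalently, with $A(z,x,y)=\sum_n z^n\sum_{\theta\in\mathcal{S}_n}x^{c(\theta)}y^{\ell(\theta)}$ and its indecomposable analogue $C(z,x,y)$, one has $A=1/(1-C)$.

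Finally, swapping $p\leftrightarrow q$ in the displayed formula permutes the summands through the symmetric constraints $p=\sum p_i$ and $q=\sum q_i$ and replaces each factor $c_{n_i,p_i,q_i}$ by $c_{n_i,q_i,p_i}$; by the indecomposable symmetry these factors are unchanged, so $a_{n,p,q}=a_{n,q,p}$, which is the theorem. (In series form, $C(z,x,y)=C(z,y,x)$ forces $A(z,x,y)=A(z,y,x)$.) The only genuine subtleties, and the steps I would check most carefully, are the additivity of the left-to-right-maxima statistic and the uniform $(n-1)!$ factor relating labeled and rooted hypermaps: the former relies on the fact that successive blocks occupy increasing value-intervals, and the latter on the triviality of the rooted automorphism group established in Proposition 3, independently of the cycle numbers of $\sigma$ and $\alpha$. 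Once these are secured, the remainder of the argument is purely formal.
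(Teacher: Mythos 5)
Your proof is correct, and it follows the paper's two-stage skeleton: first the symmetry for indecomposable permutations, obtained by transporting both statistics through $\Psi$ (Theorem \ref{thBij2}) to rooted hypermaps and exchanging $\sigma$ with $\alpha$; then the extension to all of $\Sn$ via the factorisation into indecomposable blocks, over which both statistics are additive (the paper applies its bijection block by block, you sum over compositions --- the same argument). The genuine difference lies in how the swap descends from pairs of permutations to \emph{rooted} hypermaps. The paper stays bijective: it sets $\Phi(\theta)=\Psi^{-1}(\alpha',\sigma')$, where $(\alpha',\sigma')$ is the unique hypermap in the image of $\Psi$ that is rooted-isomorphic to the swapped pair $(\alpha,\sigma)$; this invokes the existence and uniqueness of a canonical representative established in parts 2 and 3 of the proof of Theorem \ref{thBij2}, and it delivers an explicit involution on $\Sn$ exchanging the two statistics. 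You instead descend by counting: the swap is a bijection of \emph{labeled} hypermaps exchanging vertices and hyper-edges, and the proof of Proposition 3 --- a freeness statement for the conjugation action of the permutations fixing $n$ --- shows that each rooted class contains exactly $(n-1)!$ labeled hypermaps; since vertex and hyper-edge numbers are conjugation invariants, this count refines to fixed $(p,q)$, and dividing by $(n-1)!$ transfers the labeled symmetry to rooted counts, hence to $c_{n+1,p,q}=c_{n+1,q,p}$. Your route is lighter, in that it uses only the statement of Theorem \ref{thBij2} together with the (refined) Proposition 3 and never re-enters the canonical-form construction; the price is that it proves equinumerosity without exhibiting a bijection, which the paper's $\Phi$ does. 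The two points you flag as delicate --- additivity of left-to-right maxima over blocks, and uniformity of the $(n-1)!$ factor --- both hold, for exactly the reasons you give.
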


\begin{proof} We  define a bijection $\Phi$ between these two subsets. 
Let $\theta$ be an indecomposable
  permutation with $p$ cycles and $q$ left-to-right maxima, we define
$\Phi(\theta)$ as $\Psi^{-1}(\alpha', \sigma')$ where  
$\Psi(\theta)= (\sigma, \alpha)$, and 
$(\alpha', \sigma')$ is the unique hypermap isomorphic to 
 $(\alpha, \sigma)$ as a rooted
hypermap and  such that there exists an indecomposable permutation
$\theta'$ satisfying  $\Psi(\theta') = (\alpha', \sigma')$.

Clearly,  $\theta'$ has $q$ cycles and $p$
left-to-right maxima as $\theta$.
Then  $\Phi$ is a bijection among
indecomposable permutations having the desired property.

Now a decomposable  permutation $\beta$ can be written as the concatenation
of $k$ indecomposable ones~:
$$ \beta \ \ = \ \ \theta_1 \theta_2 \ldots \theta_k$$
Define $\Phi(\beta)$ by:

 $$\Phi(\beta) = \Phi(\theta_1) \Phi(\theta_2) \ldots \Phi(\theta_k)$$
with an obvious convention on the numbering of the elements
on which the $\theta_i$ act.
Clearly $\Phi(\beta)$ has also has as many cycles as $\beta$ has left-to-right
maxima and as many left-to-right maxima as $\beta$ has cycles,
completing the proof.
\end{proof}

\section{A bijection with labeled Dyck paths}

The  bijection  described below will allow us to obtain a formula 
for the number of
indecomposable permutations with a given number of cycles and of
left-to-right maxima.

\subsection{Dyck paths and labeled Dyck paths}

A Dyck path can be defined  as a word $w$  on the alphabet
$\{a,b\}$ where the number of  occurrences of the letter $a$
(denoted by $|w|_a$),
is equal to the number of occurrences of
  the letter $b$, and such that any left factor contains
no more occurrences of the letter $b$ than those of $a$.
We write:
$$ |w|_a = |w|_b \ \ \makebox{\rm and, } \ \ \forall w = w'w''\ \
 \ |w'|_a \geq |w'|_b  $$

A Dyck path is {\em primitive} if it is not the concatenation of two Dyck paths
or equivalently if it is equal to $aw'b$, where $w'$ is a Dyck path.
Such a path is usually drawn as a sequence of segments in the
cartesian plane
starting at the point $(0,0)$ and going from the point $(x,y)$ to
$(x+1, y+1)$ for each letter $a$ and from the point $(x,y)$ to
$(x+1, y-1)$ for each letter $b$. The conditions on the occurrences
of the letters translates in  the fact that the path ends on the $x$'s
axis  and never crosses it.
The  path $aaabaabbbbaabb$  is drawn in the figure below.

\begin{figure}[h]
\begin{center}
\includegraphics[scale = 0.6]{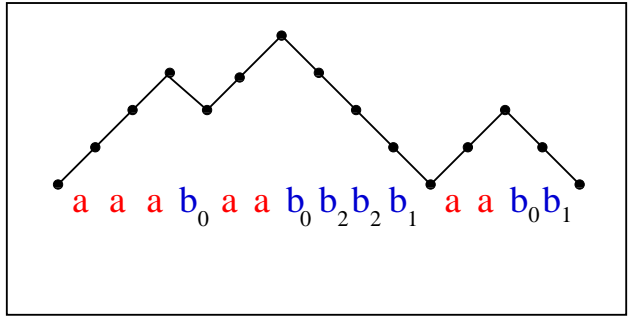}
\caption{A labeled Dyck path }
\end{center}
\end{figure}
\begin{Definition}\label{defLabel}

A {\em labeling} of a  Dyck path $w$ consists in assigning integer labels
to  the occurrences
of $b$, hence giving  a word $f$ on the alphabet
$\{a, b_0, b_1, \ldots , b_i, \ldots \}$ satisfying the following
conditions~:
\begin{itemize}
\item The occurrence $b_i$ in $f$ is  preceded by an $a$
if and only if $i = 0$, or equivalently $f$ has no
factor $ab_i$ for $i>0$, nor factor $b_jb_0$ for $j \geq 0$.
\item For each occurrence of $b_i$ in $f$ ($f = g b_ih$) with $i>0$, 
then $g$ does not end with an $a$
and  $i \leq |g|_a - |g|_b$ (where $|g|_b$ denotes $\sum _{i\geq 0}|g|_{b_i}$)
\end{itemize}
\end{Definition}

\subsection{From permutations to labeled Dyck paths}
We associate a labeled Dyck path $f$  with a permutation
$\alpha$ by the  algorithm described below.
\ The Dyck path associated with the permutation records
how the permutation is built using
two operations: the creation of  a new cycle and the insertion of an element
in a free position inside a cycle already created.

The elements $1, 2, \ldots, n$ are considered in that order.\
When $i$ is the smallest element of a  cycle of length $k$  in
$\alpha$, a new cycle is created,
$i$ is inserted as the first element of that
cycle,
and $k-1$ free positions are
created.
When $i$ is not the
smallest element of its cycle it has to be
inserted in a free position inside a cycle already
created and it is necessary to indicate  which free
position it is. For that, a {\em pivot}   element is needed, this pivot will be
the smallest element among all those which were inserted and which have, 
in their cycle, 
immediately at their  right, a free position. For instance after
considering element 1, then the pivot is 1 if it is not a fixed point
of $\alpha$ (if it is, there is no pivot).

The numbering of the free positions will begin at the pivot, the free one
at its right is numbered 1, the next free one to the right will be numbered 2, then
 proceed cyclically returning at the beginning of the
permutation after having numbered all the free positions at the right
of  the pivot.

We now describe  the bijection in detail, let 
$\alpha$ be a permutation in $\Sn$ then:

\begin{itemize}
\item With each  $i$ ($1\leq i \leq n$),
  is associated a word $f_i$ given by~:
\begin{enumerate}
\item  If $i$ is the smallest  element of a  cycle of length
  $k$ in $\alpha$, then $f_i = a^kb_0$,
\item else  $f_i = b_j$, where $j$ is the $j$-th free position (starting from
  the pivot) in which $i$ is inserted.

\end{enumerate}
\item The word $f$ associated with the permutation $\alpha$ is the
concatenation $f_1f_2 \ldots f_n$ of the $f_i$'s.
\end{itemize}

We will denote $\Delta(\alpha) = f$ the labeled Dyck path associated with
$\alpha$ by the above algorithm.

\subsubsection{Example}

Consider the permutation
$$\alpha \ \ = \ \ (1, 3, 5, 9) (2, 7, 6) (4, 8)\ \ = 3, 7, 5, 8, 9,
2, 6, 4, 1 .$$
The determination of the word $f= \Delta(\alpha)$
 is illustrated in
the figure below, where at each step of the algorithm the pivot is
represented by an arrow pointing to it.

\begin{figure}[h]
\begin{center}
\includegraphics{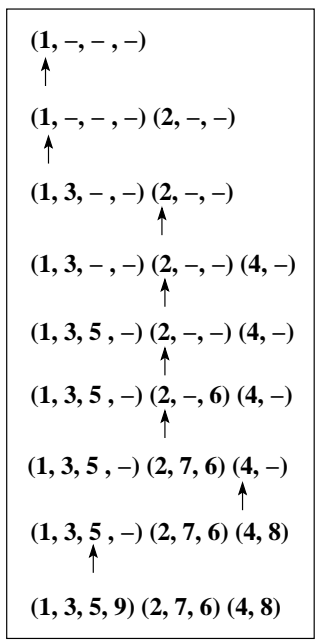}
\caption{Associating a labeled Dyck path to  a permutation }
\end{center}
\end{figure}

This gives:
$$ f_1 = aaaab_0, f_2 = aaab_0, f_3 = b_1,  f_4 = aab_0,
f_5 = b_4, f_6 = b_2, f_7 = b_1, f_8 = b_1, f_9 = b_1$$
Hence $ f \ \ = \ \ aaaab_0aaab_0b_1aab_0b_4b_2b_1b_1b_1.$

Note that the word $f$ is a Dyck path since each cycle
of the permutation is considered for the first time by its smallest element,
creating an excedence of $k-1$ for the number of occurrences of the $a$'s
with respect to those of the $b$'s.\ Then a letter $b$ is 
written for each element of
the cycle, so that there cannot be an excedence of the number of occurrences of $b$'s compared to that of $a$'s.

Note that this is a bijection since from a labeled Dyck path $f$
 one can obtain the  permutation $\alpha$  from $f$ by:

\bigskip

{\bf Inverse algorithm.} Let $f$ be a labeled Dyck path,

\begin{itemize}
\item Consider the occurrences of the letters $b$ from left to right in
  $f$
\item If  the $i$-th occurrence of $b$ in $f$   is $b_0$
let $k$ be the number of $a$ immediately
  before it, then open a new cycle of length $k$ with $i$ as first element and
$k-1$ free positions in it.
\item If the $i$-th occurrence of a $b$ is labeled $b_p$, with $p>0$, then
put $i$ in the $p$-th free position of the opened cycles starting from
the pivot.
\end{itemize}

\subsection{Characterization}
\begin{Theorem}\label{thBij1}
Let $\alpha$ be a permutation and $f= \Delta(\alpha)$, then:
\begin{itemize}

\item
The number of cycles in $\alpha$ is equal to the number of occurrences of
$b_0$ in $f$.

\item
The permutation $\alpha$ is indecomposable if and only if the word $f$
is primitive (i. e, it is not the concatenation of two Dyck paths).

\item
If the permutation $\alpha$ is indecomposable, then the number
of left-to-right maxima is equal to the number of occurrences of $b_1$
in $f$.

\end{itemize}

\end{Theorem}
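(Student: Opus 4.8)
The plan is to read the algorithm $\Delta$ element by element and to track a single invariant: the height of the Dyck path after the block $f_i$, which I claim is exactly the number of free positions still open once $i$ has been placed. Since each $f_j$ contains exactly one letter $b$, while the block opening the cycle of length $k$ of a smallest element contributes $a^kb_0$, the height after $f_1\cdots f_i$ is
$$ h_i \ = \ \Big(\sum_{C:\ \min C\le i}|C|\Big) - i \ = \ |S_i| - i , $$
where $S_i$ denotes the union of those cycles whose smallest element is at most $i$. The first assertion is then immediate: a letter $b_0$ is written exactly once per cycle, when its smallest element is read, so the number of $b_0$'s equals the number of cycles.

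For the second assertion I would work entirely through $h_i$. As $\{1,\ldots,i\}\subseteq S_i$ and $\alpha$ is a bijection, $h_i=|S_i\setminus\{1,\ldots,i\}|\ge 0$, with $h_i=0$ if and only if $S_i=\{1,\ldots,i\}$, that is, if and only if $\{1,\ldots,i\}$ is a union of cycles of $\alpha$. A short case check on the two block shapes ($a^kb_0$ and $b_j$) shows the path meets the $x$-axis only at these block boundaries and never strictly inside a block. Hence $f$ is primitive precisely when $h_i>0$ for all $i<n$, which by the equivalence above says no initial interval $\{1,\ldots,i\}$ with $i<n$ is a union of cycles, i.e. $\alpha$ is indecomposable.

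The third assertion is the substantial one. First I would pin down the pivot: at step $i$ the inserted elements are $1,\ldots,i-1$, and the slot to the right of $x$ (the one destined to hold $\alpha(x)$) is still free exactly when $\alpha(x)$ has not yet been read, i.e. when $\alpha(x)\ge i$; hence the pivot at step $i$ is $\min\{x<i:\ \alpha(x)\ge i\}$. Next, $f_i=b_1$ means $i$ fills the slot immediately to the right of the pivot $q$, which sets $\alpha(q)=i$, so $q=\alpha^{-1}(i)$. Combining these,
$$ f_i=b_1 \iff \alpha^{-1}(i)<i \ \text{ and } \ \alpha(x)<i \ \text{ for all } x<\alpha^{-1}(i). $$
On the other hand, writing $p=\alpha^{-1}(i)$, the value $i$ is a left-to-right maximum exactly when $\alpha(x)<\alpha(p)=i$ for all $x<p$ — the very same inequality, but lacking the clause $\alpha^{-1}(i)<i$. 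So $f_i=b_1$ always forces $i$ to be a left-to-right maximum, and only the missing clause needs to be recovered.

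This is where indecomposability enters and is the main obstacle to state cleanly. If $i$ is a left-to-right maximum then the $p-1$ distinct values $\alpha(1),\ldots,\alpha(p-1)$ all lie below $i$, so $p-1\le i-1$ by injectivity alone, giving $\alpha^{-1}(i)=p\le i$. Equality $p=i$ means $\alpha(i)=i$, a fixed point, which would make $\{1,\ldots,i\}$ invariant hence a union of cycles; this is impossible for an indecomposable $\alpha$ when $i<n$, while $i=n$ would make $n$ itself a fixed point, again contradicting indecomposability. Thus $\alpha^{-1}(i)<i$ is automatic, the two conditions coincide, and counting yields that the number of $b_1$'s equals the number of left-to-right maxima, the degenerate case $n=1$ being handled separately. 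The points I would write with most care are the precise description of the pivot and of which slot an inserted element fills, and exactly this injectivity-plus-indecomposability step that upgrades the one-sided implication into an equivalence.
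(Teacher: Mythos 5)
Your proof is correct, and on the first two items it is essentially the paper's own argument made precise: the paper also counts cycles by occurrences of $b_0$, and characterizes decomposability by the existence of a step, before the last one, at which no free slot remains --- which is exactly your condition $h_i=0$, since the height after the block $f_i$ equals the number of open free positions. Your invariant $h_i=|S_i|-i$ and the check that the path can only return to the axis at block boundaries are a welcome formalization of what the paper states in one line. On the third item your route is genuinely sharper. The paper argues in two one-sided steps: first, that a cycle minimum (a value carrying $b_0$) is never a left-to-right maximum when $\alpha$ is indecomposable --- this is where fixed points enter, exactly your case $p=i$ --- and second, that each $b_1$ is the image of the pivot and hence a left-to-right maximum. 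What the paper never says explicitly is why a value labeled $b_j$ with $j\geq 2$ cannot be a left-to-right maximum, a fact needed for the two counts to agree. Your pivot formula $q=\min\{x<i:\ \alpha(x)\geq i\}$ and the resulting equivalence
$$ f_i=b_1 \iff \alpha^{-1}(i)<i \ \hbox{ and } \ \alpha(x)<i \ \hbox{ for all } x<\alpha^{-1}(i) $$
close this gap automatically, since being a left-to-right maximum is precisely the second clause and indecomposability supplies the first. One caveat: for $n=1$ the third assertion is simply false ($\Delta$ of the identity of ${\mathcal S}_1$ is $ab_0$, with no $b_1$, yet the permutation is indecomposable and has one left-to-right maximum), so this case must be excluded rather than ``handled separately''; the paper's statement carries the same implicit assumption $n\geq 2$.
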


\begin{proof}
\begin{itemize}
\item Since each smallest element in a cycle inserts a $b_0$ in $f$ the
number of cycles is equal to the number of occurrences of $b_0$

\item The  permutation is decomposable if and only if there is no free slot
at a step  before the very  end of the algorithm. But this exactly means that there is
a left factor of $f$  which is a Dyck path.

\item The smallest element $i$  of a cycle
of  length greater that 1 of a permutation $\alpha$
cannot be a left-to-right maximum of $\alpha$ since $i < a_i$.
If  a fixed point $j$  of $\alpha$ is a left-to-right maximum
then $j = a_j$ and $a_i <j$ for $i<j$ implies that $\{a_1, \ldots , a_{j-1}\}
= \{1,2 , \ldots , j-1\}$ showing that $\alpha$ is decomposable.
Moreover a $b_1$ corresponds to an element written immediately after the pivot,
at some
 step of the algorithm; this element  is the image of the pivot which is
the smallest element already considered with a free position after it; then
it gives  a left-to-right maximum.

\end{itemize}

\end{proof}

\noindent
{\bf Remark.} If $\alpha$ is decomposable, then a fixed point may
be one of its  left-to-right maximum, hence the number of left-to right maxima
for a permutation $\alpha$ with associated labeled Dyck path $f$ is
not less than the number of occurrences of $b_1$ in $f$ and not greater than
this number augmented by the number of fixed points of $\alpha$ (which is
also
the number of occurrences of $b_0$ preceded by exactly one $a$).
\medskip

\begin{coro}
The number of indecomposable permutations of $\Sn$ with $p$ cycles and $q$
left-to-right maxima is equal to the number of primitive labeled Dyck paths
of length $2n$ with $p$ occurrences of $b_0$ and $q$ occurrences of $b_1$.
\end{coro}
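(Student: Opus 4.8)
The plan is to treat this corollary as a direct repackaging of Theorem~\ref{thBij1}, together with the fact—established by the construction of $\Delta$ and its inverse algorithm—that $\Delta$ is a bijection from $\Sn$ onto the set of labeled Dyck paths of length $2n$. First I would confirm the length bookkeeping so that ``length $2n$'' is justified: for a permutation $\alpha\in\Sn$, each cycle of length $k$ contributes exactly $k$ occurrences of $a$, via the word $a^kb_0$ written for its smallest element, so $\Delta(\alpha)$ has $\sum k = n$ occurrences of $a$; since every one of the $n$ elements of $\{1,\ldots,n\}$ contributes exactly one letter $b$ (either a $b_0$ or some $b_j$), there are also $n$ occurrences of $b$, and hence the path has length $2n$.

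Next I would restrict $\Delta$ to indecomposable permutations. By the second item of Theorem~\ref{thBij1}, $\alpha$ is indecomposable if and only if $f=\Delta(\alpha)$ is primitive; hence $\Delta$ restricts to a bijection between the indecomposable permutations of $\Sn$ and the primitive labeled Dyck paths of length $2n$. This is the step that aligns the two ambient sets whose refined counts we wish to compare.

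Finally I would read off the two statistics along this restricted bijection. The first item of Theorem~\ref{thBij1} gives that the number of cycles of $\alpha$ equals the number of occurrences of $b_0$ in $f$, and the third item gives that the number of left-to-right maxima of $\alpha$ equals the number of occurrences of $b_1$ in $f$. Comparing cardinalities of the two sets in bijection then yields the claimed equality. The one point deserving a moment's care—and the closest thing to an obstacle—is that the third item of Theorem~\ref{thBij1} has \emph{indecomposability of $\alpha$ as a hypothesis}; this is precisely why the restriction to primitive paths in the previous step must be made before the $b_1$-statistic is invoked, and once that is observed there is no further difficulty, the entire content being carried by Theorem~\ref{thBij1}.
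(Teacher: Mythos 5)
Your proposal is correct and matches the paper's (implicit) reasoning: the paper states this corollary without a separate proof, as an immediate consequence of Theorem~\ref{thBij1} together with the bijectivity of $\Delta$ (established via the inverse algorithm). Your additional care about the length bookkeeping and about invoking the $b_1$-statistic only after restricting to primitive paths simply makes explicit what the paper leaves to the reader.
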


\subsection{Permutations by numbers of left-to-right maxima and
right-to-left minima}
In \cite{robletViennot}, E. Roblet and X. G. Viennot define a
bijection similar to $\Delta$, between permutations and another kind of labeled Dyck paths. Their labeling is on the alphabet $\{a, b_1, b_2, \ldots , b_i, \ldots\}$ and such that if $w'$ is a labeled Dyck word, in there sense,
 then~:
\begin{itemize}

\item Any occurrence in $w'$ of $b_i$ preceded by an $a$ is such that
$i=1$ 

\item Each occurrence of $b_i$ in $w'$ such that $w' = gb_i h$ 
satisfies $1 \leq i \leq |g|_a - |g|_b$.
\end{itemize}

Let us  denote by $\Delta'(\alpha)$ the labeled Dyck path obtained from
the permutation $\alpha$ by the bijection of E. Roblet and
X. Viennot.  The main feature of the bijection is  that the number
 of factors  $ab_1$ in
$w' = \Delta'(\alpha)$ is equal to the number of right-to-left minima of
$\alpha$, and the number of left-to-right maxima in $\alpha$ is equal
to the number of occurrences of $b_k\ \ (w = gb_kh)$ such that
$k = |g|_a -  |g|_b$. Moreover  like for $\Delta$, $\alpha$ is indecomposable if
and only if $w'$ is a primitive Dyck word. 
 Note that an occurrence of $b_1$ preceded
by an $a$ may correspond to both a left-to-right maximum and
right-to-left minima of $\alpha$ if $ |g|_a -  |g|_b = 1$, but this cannot
happen if $\alpha$ is indecomposable.

\begin{coro}
The number of permutations of $\Sn$ with $p$ left-to-right maxima and
$q$ cycles is equal to the number of  permutations of $\Sn$ with
$p$ left-to-right maxima and
$q$ right-to-left minima.
\end{coro}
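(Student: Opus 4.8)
The plan is to establish the equality first for \emph{indecomposable} permutations, by running the two bijections $\Delta$ and $\Delta'$ in parallel, and then to extend it to all permutations through the decomposition into indecomposable blocks.

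The starting observation is that $\Delta$ and $\Delta'$ take values in labeled Dyck paths built on the \emph{same} underlying Dyck paths and subject to the \emph{same} freedom in their labels. Comparing Definition \ref{defLabel} with the conditions of Roblet and Viennot, in either model a down-step preceded by an $a$ carries a forced label ($b_0$ for $\Delta$, $b_1$ for $\Delta'$), whereas a down-step preceded by a $b$, occurring where $|g|_a-|g|_b=h$, may receive any label in $\{1,\dots,h\}$, independently of the other steps. Hence the two bijections produce exactly the same set of labeled paths, only the read-off statistics differing. In particular, the number of down-steps preceded by an $a$ depends on the underlying path alone; by Theorem \ref{thBij1} it equals the number of cycles (the number of $b_0$'s) when the path is interpreted as $\Delta(\alpha)$, and by the stated properties of $\Delta'$ it equals the number of right-to-left minima (the number of factors $ab_1$) when the same path is interpreted as $\Delta'(\alpha)$.

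For the indecomposable case I would fix a primitive path $w$ of length $2n$ with $n\ge 2$, whose down-steps preceded by a $b$ occur at heights $h_1,\dots,h_m$; a labeling is then a tuple $\ell=(\ell_1,\dots,\ell_m)$ with $1\le \ell_j\le h_j$. Reading $w$ as $\Delta(\theta)$, Theorem \ref{thBij1} makes the number of left-to-right maxima equal to $\#\{j:\ell_j=1\}$. Reading the same labeled path as $\Delta'(\theta)$, the number of left-to-right maxima is the number of down-steps whose label equals the height $|g|_a-|g|_b$; since a primitive path of length at least $4$ has no down-step preceded by an $a$ at height $1$, these are exactly the steps with $\ell_j=h_j$, so the count is $\#\{j:\ell_j=h_j\}$. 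The product over $j$ of the reversals $\ell_j\mapsto h_j+1-\ell_j$ is an involution of the labelings of $w$ interchanging the conditions $\ell_j=1$ and $\ell_j=h_j$; consequently, for each $p$, the labelings of $w$ with $\#\{j:\ell_j=1\}=p$ are as numerous as those with $\#\{j:\ell_j=h_j\}=p$. Summing over all primitive $w$ with a fixed number of down-steps preceded by an $a$, and treating the trivial case $n=1$ separately, I obtain that the indecomposable permutations of $\Sn$ with $p$ left-to-right maxima and $q$ cycles are as many as those with $p$ left-to-right maxima and $q$ right-to-left minima.

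Finally I would pass to arbitrary $\beta\in\Sn$ using its unique factorization $\beta=\theta_1\theta_2\cdots\theta_k$ into indecomposable blocks on consecutive intervals. Because each block acts on a range of values larger than all previous ones, the numbers of cycles, of left-to-right maxima and of right-to-left minima are each additive over the blocks. Letting $I_p$ and $J_p$ denote the generating polynomials counting indecomposable permutations of $\mathcal{S}_p$ by (left-to-right maxima, cycles) and by (left-to-right maxima, right-to-left minima), the previous paragraph gives $I_p=J_p$ for every $p$. Peeling off the first block turns this additivity into a single recurrence $F_n=\sum_{p=1}^n F_p^{\star}F_{n-p}$, with $F_0=1$, satisfied by the full polynomial $G_n$ of (left-to-right maxima, cycles) when $F_p^{\star}=I_p$, and by the full polynomial $H_n$ of (left-to-right maxima, right-to-left minima) when $F_p^{\star}=J_p$. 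As $I_p=J_p$, induction yields $G_n=H_n$, which is the asserted identity. The one genuinely delicate point is the third paragraph: the two bijections advertise the left-to-right maxima through different features of the path---down-steps preceded by $b$ and labeled $1$ for $\Delta$, against down-steps carrying the maximal label for $\Delta'$---and the crux is to recognize that, over the common labeling structure, these two features are equidistributed step by step, the only exceptional down-steps (those preceded by $a$ at height $1$) being confined to fixed-point blocks and absorbed without harm by the decomposition.
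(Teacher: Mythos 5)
Your proof is correct and takes essentially the same route as the paper: your label-reversal involution $\ell_j \mapsto h_j+1-\ell_j$ on down-steps not preceded by an $a$ is exactly the paper's substitution (replace $b_i$ at height $h$ by $b_{h+1-i}$, and $ab_1$ by $ab_0$) turning $\Delta'(\theta)$ into a valid word for $\Delta$, and your treatment of the exceptional height-one peaks and of general permutations via additivity of the three statistics over indecomposable blocks matches the paper's argument.
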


 \begin{proof}
We first show that this is true for indecomposable permutations.
Let $\theta$ be an indecomposable permutation and let $w =
\Delta'(\theta)$, then replace each occurrence of $b_1$
in $w$  preceded by an $a$ by $b_0$
and each occurrence of $b_i  \ (w = gb_kh)$ not preceded by an $a$ by
$b_j$, where $j = |g|_a - |g|_b +1 -i$. Denote $w'$ the word such obtained,
it satisfies the conditions of Definition \ref{defLabel}, then
$\theta' = \Delta^{-1}(w')$ is well defined. $\theta'$  has as many
left-to-right maxima as $\theta$ and as many cycles as $\theta$ has
right-to-left minima, hence this 
gives the result for indecomposable permutations.

In order to complete  the proof it suffices to use the fact that any
permutation can be decomposed as 
 the concatenation of indecomposable permutations,
and to observe that in this decomposition the numbers of cycles,
left-to-right maxima and right-to-left minima of the permutation
are the  sum of the corresponding parameters of the indecomposable components.
 \end{proof}
\medskip

\noindent
{\bf Remark.} Note that this corollary gives another proof of the
symmetry of the statistics for the number of cycles, and number of
left-to-right maxima. Indeed,  this symmetry is clear for
the parameters left-to-right maxima and right-to-left minima since
they are exchanged by the inverse operation on permutations.

\section{Bivariate polynomials associated to Dyck paths}
In this section we introduce two polynomials whose construction is
very similar to those introduced by P. Deleham (see \cite{deleham})
who defined the $\Delta$-operator. This operator consists 
in assigning a polynomial to a Dyck word as the product of binomials
associated to each occurrence of a letter $b$, these binomials depend 
only of the height of the occurrence.

\subsection{Polynomial associated to a given Dyck path}
A factor $ab$ in a Dyck path is often called a {\em peak}.
We go back to the labeled Dyck paths considered in Section 3, and
associate a polynomial $L(w)$ in two variables $x, y$ with a Dyck
path $w$ as follows:
\begin{Definition}
 Let $w$ be a Dyck path of length $2n$ and
let $w'_1b, w'_2b, \ldots w'_nb$ be the left factors of $w$ ending with
 an occurrence of $b$, that is~ $w = w'_ibw''_i$.

The polynomial
$L(w)$ is the product of binomials $u_i$
associated with each  $w'_i$ by the following rule:

\begin{itemize}
\item  If $w'_i$ ends with $a$, then $u_i = x$,
\item else  $u_i= y +h_i$, where $h_i$ is given by~
$ h = |w'_ib|_a -|w'_ib|_b$.
\end{itemize}
\end{Definition}
Remark that the number of possible values of $i$ in  $b_i$ for
 a labeling of $w$
is $h_i+1$, hence $L(1,1)$ is exactly the number of possible labelings of $w$.

An example of a polynomial associated with a Dyck path is given below.
\begin{figure}[h]\label{figDyck}
\begin{center}
\includegraphics{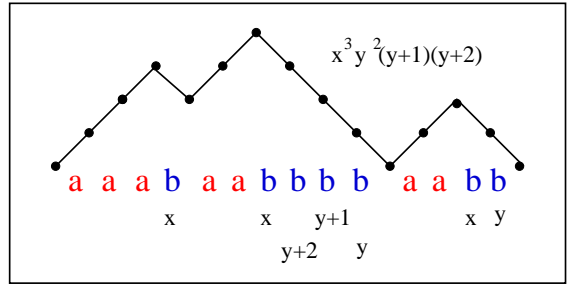}
\caption{The polynomial associated with the Dyck path $aabaabbbbaabb$. }
\end{center}
\end{figure}

\begin{propo}\label{propDyck1}
The polynomial $L(w) = x^p \sum_{i=1}^na_i y^i$ associated with a Dyck path
$w$ is such that $p$ is the number of factors $ab$ in $w$, and $a_i$ is
the number of labelings of $w$ such that $i$ occurrences of $b$ are
labeled $b_1$.
\end{propo}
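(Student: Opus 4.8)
The plan is to expand the product defining $L(w)$ and match each factor to the labeling choices at the corresponding occurrence of $b$, so that the monomial $x^p y^i$ is produced exactly by the labelings of $w$ with $i$ letters $b_1$. Throughout, index the $n$ occurrences of $b$ by $1, \ldots, n$ according to the left factors $w'_1 b, \ldots, w'_n b$, and call the $i$-th occurrence a \emph{peak} when $w'_i$ ends with $a$; let $N$ be the set of indices of non-peak occurrences.

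First I would dispose of the peaks. By Definition \ref{defLabel} a peak $b$ is preceded by $a$ and is therefore forced to carry the label $b_0$; it admits a unique label, never receives the label $b_1$, and contributes the factor $u_i = x$. Since the peaks are exactly the factors $ab$ of $w$, there are $p$ of them, so these factors contribute the prefactor $x^p$ and
$$L(w) = x^p \prod_{i \in N} (y + h_i).$$

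Next I would analyze a non-peak occurrence $i \in N$. Here $w'_i$ ends with $b$, the height $|w'_i|_a - |w'_i|_b$ before the step is at least $1$, and $h_i = |w'_i b|_a - |w'_i b|_b = (|w'_i|_a - |w'_i|_b) - 1 \geq 0$. By the second clause of Definition \ref{defLabel} the admissible labels at this occurrence are exactly $b_1, b_2, \ldots, b_{h_i + 1}$, giving $h_i + 1$ choices. I would match these to the binomial $y + h_i$ by letting the term $y$ record the single choice $b_1$ and the constant $h_i$ record the remaining $h_i$ choices $b_2, \ldots, b_{h_i + 1}$. This identification is the crux of the argument; the only point needing care is that $b_1$ is always available, i.e.\ that $h_i \geq 0$, which holds precisely because a non-peak $b$ follows a descent, so the pre-height is at least $1$.

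Finally I would expand the product and read off the coefficients. The standard expansion gives
$$\prod_{i \in N} (y + h_i) = \sum_{S \subseteq N} y^{|S|} \prod_{j \in N \setminus S} h_j,$$
so the coefficient $a_i$ of $y^i$ equals $\sum_{|S| = i} \prod_{j \in N \setminus S} h_j$. For a fixed $S \subseteq N$ of size $i$, assigning $b_1$ to the positions in $S$ and any one of the $h_j$ labels $b_2, \ldots, b_{h_j + 1}$ to each $j \in N \setminus S$ describes exactly $\prod_{j \in N \setminus S} h_j$ labelings, namely those whose $b_1$-positions are precisely $S$. Summing over all size-$i$ subsets therefore counts the labelings of $w$ with exactly $i$ occurrences of $b_1$, which is $a_i$. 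Together with the prefactor $x^p$, whose exponent is the number of peaks, this gives $L(w) = x^p \sum_i a_i y^i$ and completes the proof.
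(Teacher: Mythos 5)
Your proof is correct and takes essentially the same approach as the paper's: the factors $u_i = x$ account for the peaks (forced label $b_0$), and for each non-peak occurrence of $b$ the binomial $y + h_i$ is interpreted as the choice between the label $b_1$ (the term $y$) and the $h_i$ remaining admissible labels $b_2, \ldots, b_{h_i+1}$. The only difference is that you carry out the multilinear expansion of $\prod_{i}(y+h_i)$ explicitly, a bookkeeping step the paper leaves implicit.
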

\begin{proof}
The first part is immediate since each  $u_i= x$ corresponds to  a
factor $ab$ in $w$. For the second part, note that the possible
labelings $b_i$ of an occurrence of $b$ not preceded by an $a$
are such that $1 \leq i \leq |w'b|_a - |w'|_b= h_i+1$. Since we have
in this case $u_i = y + h_i$ then $y$ in $u_i$ may be interpreted as
the labeling $b_1$ for that occurrence, the other labelings
corresponding to the integer $h_i$.
\end{proof}

\begin{propo}\label{propDyck2}

Let  $w= a^{k_1}b^{\ell_1}a^{k_2}b^{\ell_2} \ldots a^{k_p}b^{\ell_p} $
be  a primitive Dyck path, where $k_i >0, \ell_i >0$
then the coefficient $a_i$  in
the  polynomial $L(w)= x^p \sum a_iy^i $
 is the number of indecomposable permutations
$\theta$ such that~:

\begin{itemize}
\item $\theta$ has $p$ cycles which smallest elements are
$1, 1+ \ell_1, 1+ \ell_1 + \ell_2, \ldots ,
1+ \ell_1+\ell_2 + \ldots + \ell_{p-1} $;
\item these cycles are of respective lengths $k_1, k_2, \ldots k_p$;
 \item $\theta$ has $i$ left-to-right maxima.
\end{itemize}

\end{propo}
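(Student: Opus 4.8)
The plan is to realize the coefficient $a_i$ through the bijection $\Delta$ of Section 3, using the fact established in Proposition \ref{propDyck1} that $a_i$ counts the labelings of $w$ in which exactly $i$ occurrences of $b$ receive the label $b_1$. First I would analyze which labelings the primitive word $w = a^{k_1}b^{\ell_1}\cdots a^{k_p}b^{\ell_p}$ admits. By the rules of Definition \ref{defLabel}, an occurrence of $b$ carries the label $b_0$ precisely when it is preceded by an $a$; hence in each block $a^{k_j}b^{\ell_j}$ the first of the $\ell_j$ occurrences of $b$ is forced to be $b_0$, while the remaining $\ell_j-1$ occurrences (each preceded by another $b$) receive positive labels. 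In particular every labeling of $w$ has exactly $p$ occurrences of $b_0$, one opening each block.

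Next I would apply the inverse algorithm $\Delta^{-1}$ to an arbitrary labeling $f$ of $w$ and read off the cycle structure of $\theta = \Delta^{-1}(f)$. Since the $i$-th occurrence of $b$ corresponds to the element $i$, the $b_0$'s sit at the $b$-positions $1,\ 1+\ell_1,\ 1+\ell_1+\ell_2,\ \ldots,\ 1+\ell_1+\cdots+\ell_{p-1}$, so these are exactly the smallest elements of the $p$ cycles; and since the $j$-th $b_0$ is preceded by $a^{k_j}$, the cycle it opens has length $k_j$. Thus $\theta$ meets the first two conditions of the statement. Conversely, applying $\Delta$ to any permutation with that prescribed cycle structure reproduces $w$ as the underlying unlabeled Dyck path: the elements $1,1+\ell_1,\ldots$ are the cycle minima and contribute the groups $a^{k_j}b_0$, while every other element is non-minimal and contributes a single letter $b$. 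Hence $\Delta$ restricts to a bijection between the labelings of $w$ and the permutations $\theta$ satisfying the first two conditions.

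It then remains to match the parameter $i$. Because $w$ is primitive and labeling does not alter the underlying path, every such $f$ is a primitive labeled Dyck path, so by the second assertion of Theorem \ref{thBij1} each $\theta$ in the bijection is indecomposable. For indecomposable permutations the third assertion of Theorem \ref{thBij1} identifies the number of left-to-right maxima of $\theta$ with the number of occurrences of $b_1$ in $f = \Delta(\theta)$. Combining this with Proposition \ref{propDyck1}, the labelings of $w$ with exactly $i$ occurrences of $b_1$ correspond bijectively to the indecomposable permutations $\theta$ of the prescribed cycle type having $i$ left-to-right maxima, and their common count is $a_i$, as claimed.

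The step I expect to require the most care is the verification, in both directions, that $\Delta$ and $\Delta^{-1}$ carry the cycle-type constraints to the shape of $w$ — in particular, that no element other than the $p$ cycle minima can introduce an $a$, so that the blocks $a^{k_j}$ line up exactly as in $w$. Everything else follows directly from the already-established Theorem \ref{thBij1} and Proposition \ref{propDyck1}.
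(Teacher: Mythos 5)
Your proof is correct and takes essentially the same approach as the paper, whose entire proof is the one-line remark that the result ``follows from Proposition \ref{propDyck1} and Theorem \ref{thBij1}.'' You have simply made explicit the details that remark leaves implicit: the bijective correspondence, via $\Delta$ and its inverse, between labelings of $w$ and permutations with the prescribed cycle minima and cycle lengths, together with the transfer of primitivity (hence indecomposability) and of the $b_1$-count to left-to-right maxima.
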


\begin{proof}
Follows from Proposition \ref{propDyck1} and Theorem \ref{thBij1}
\end{proof}

\subsection{Sum of the polynomials for all  paths of a given length}
Let $D_n$ be the set of Dyck paths of length $2n$ and $D'_n$ the set
of primitive Dyck paths of length $2n$; clearly $D'_n = aD_{n-1}b$.\
We consider the polynomials $L_n(x,y) = \sum_{w \in D_n} L(w)$
and $L'_n(x,y) = \sum_{w \in D'_n} L(w)$.\
An example of the polynomials associated with the five Dyck paths of
length 6 and allowing to compute $L_3 = x^3 + 3x^2y+ xy^2 +xy$ is given below:

\begin{figure}[h]
\begin{center}
\includegraphics{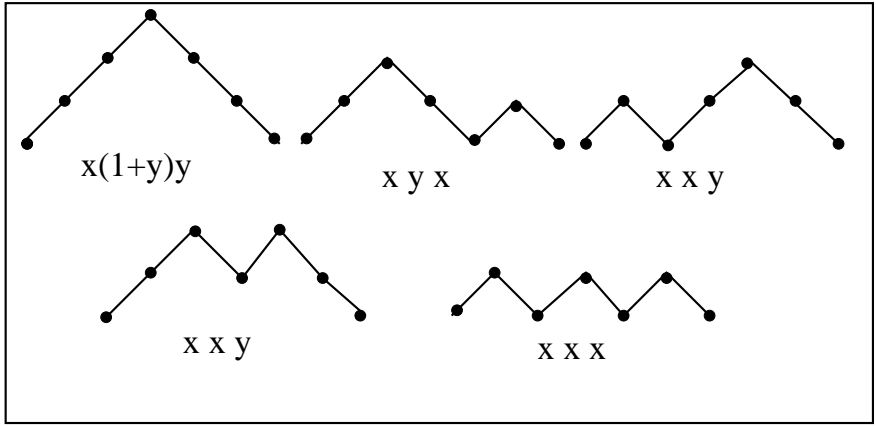}
\caption{A polynomial enumerating the number of cycles and
  left-to-right maxima}
\end{center}
\end{figure}
\begin{propo}
The polynomials $L_n$ and $L'_n$ satisfy the following relations
for $n>1$~:

$$ L'_n (x, y) = y L_{n-1}(x, y+1)$$
$$ L_n (x, y) = L'_n(x, y) + \sum_{p=1}^{n-1} L'_p(x,y) L_{n-p}(x,y)$$

\end{propo}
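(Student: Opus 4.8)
The two identities are of different natures, so I would treat them separately. For the first one, $L'_n(x,y) = y\,L_{n-1}(x,y+1)$, the plan is to exploit the structural fact recorded just before the proposition: every primitive Dyck path of length $2n$ is of the form $aw'b$ where $w'$ ranges over all Dyck paths of length $2(n-1)$, i.e. $D'_n = aD_{n-1}b$. So I would start from $L'_n(x,y)=\sum_{w'\in D_{n-1}} L(aw'b)$ and analyze how the single binomial $u_i$ attached to the final $b$, together with the prepended $a$, modifies the product $L(w')$. Prepending an $a$ raises the height of every occurrence of $b$ in $w'$ by exactly one, so each binomial of the form $y+h_i$ coming from $w'$ becomes $y+(h_i+1)$, which is precisely the substitution $y\mapsto y+1$ in $L(w')$; the binomials equal to $x$ (those at peaks ending in $a$) are unaffected by this height shift. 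The trailing $b$ in $aw'b$ is never preceded by an $a$ inside $aw'b$ unless $w'$ is empty, and its height is $1$, contributing the factor $y+0=y$. Assembling these observations gives $L(aw'b) = y\cdot L(w')\big|_{y\mapsto y+1}$, and summing over $w'\in D_{n-1}$ yields the claim.

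For the second identity I would use the standard first-return (primitive-factor) decomposition of Dyck paths. Every nonempty Dyck path $w$ of length $2n$ either is itself primitive, or factors uniquely as $w = w_1 w_2$ where $w_1$ is its first primitive factor (of some length $2p$, $1\le p\le n-1$) and $w_2$ is a nonempty Dyck path of length $2(n-p)$. The key point is that $L$ is \emph{multiplicative under concatenation}: the binomial $u_i$ attached to each occurrence of $b$ depends only on whether its immediately preceding letter is $a$ and on the height $h_i = |w'_ib|_a - |w'_ib|_b$ of that left factor, and both data are unchanged when the path is written as a concatenation, because a primitive first factor returns to height $0$ before $w_2$ begins. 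Hence $L(w_1 w_2) = L(w_1)L(w_2)$. Splitting the sum $L_n = \sum_{w\in D_n}L(w)$ according to this decomposition — the primitive term contributing $L'_n$, and the composite terms contributing $\sum_{p=1}^{n-1}\big(\sum_{w_1\in D'_p}L(w_1)\big)\big(\sum_{w_2\in D_{n-p}}L(w_2)\big) = \sum_{p=1}^{n-1}L'_p(x,y)L_{n-p}(x,y)$ — gives the stated recurrence.

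The step I expect to require the most care is verifying the multiplicativity $L(w_1w_2)=L(w_1)L(w_2)$, and in particular confirming that the height statistic $h_i$ of each $b$ in $w_2$ is computed correctly \emph{inside the full word} $w_1w_2$. This is where the hypothesis that $w_1$ is primitive (returns to the axis) is essential: it guarantees that $|w_1|_a=|w_1|_b$, so that for any left factor of $w_2$ the added prefix $w_1$ contributes equally to the counts of $a$'s and $b$'s and therefore leaves the height differences unchanged. I would also double-check the boundary condition at the junction, namely that the first letter of $w_2$ (an $a$, since $w_2$ is a nonempty Dyck path) does not spuriously turn a $b_0$-type binomial into an $x$-type one or vice versa; since $w_2$ begins with $a$, its first $b$ after that $a$ behaves exactly as it does in $w_2$ standing alone. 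Once these compatibilities are checked, both identities follow immediately by summation.
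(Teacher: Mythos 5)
Your proof is correct and takes essentially the same route as the paper's: the first identity via the decomposition $D'_n = aD_{n-1}b$ together with the observation that prepending $a$ shifts every non-peak height by one (giving the substitution $y \mapsto y+1$) while the final $b$ contributes the factor $y$, and the second identity via the first-return decomposition into a primitive prefix and a remaining Dyck path, using multiplicativity of $L$ under concatenation. The paper states this argument in two sentences; your write-up simply supplies the details it leaves implicit (in particular the check that $|w_1|_a = |w_1|_b$ preserves heights across the junction), and these details are all verified correctly.
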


\begin{proof}
For the first relation note that
  the value of $L(awb)$ is obtained from $L(w)$ by replacing  $y+1$ for $y$
and multiplying by $y$.
To prove the second relation observe that a Dyck path of length $2n$
is either primitive or
the concatenation of a primitive Dyck path of length $2p$ where $1 \leq p < n$
and another one
(not necessarily primitive) of length $2n -2p$.
\end{proof}

The first few values of these polynomials are:
$$ L_1 = x,\ \  L'_1 = x,\ \ \ \ \ \  L'_2 = xy ,\ \ \  L_2 = xy + x^2$$
$$ L'_3 = xy^2+ x^2y + xy, \ \ \ \ \ L_3 = x^3 + xy^2+ 3 x^2y + xy$$
$$L'_4 = xy^3 + 3 x y^2 + 3y^2x^2 + 2xy +3yx^2 +yx^3$$
$$L_4 = xy^3 + 3 x y^2 + 6y^2x^2 + 2xy +5yx^2 +6yx^3 + x^4$$

\begin{coro}
The coefficient of $x^py^q$ in the polynomial $L'_n$ is the number of 
indecomposable permutations of $\Sn$ with $p$ cycles and $q$ left-to-right
maxima. Moreover, the 
polynomials $L'_n$
 are symmetric in $x,y$: for all $n>1$ we have
$$L'_n(x,y) \ \ = \ \ L'_n(y,x)$$
\end{coro}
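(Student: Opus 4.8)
The plan is to treat the two assertions separately, deriving the first from the enumerative content of Proposition \ref{propDyck2} and the second from the symmetry theorem proved earlier for permutations (the one answering the conjecture of Han and Foata).

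For the first assertion I would unwind the definition $L'_n(x,y) = \sum_{w \in D'_n} L(w)$ and apply Proposition \ref{propDyck2} term by term. Each primitive Dyck path $w \in D'_n$ has a unique run-length form $a^{k_1}b^{\ell_1}\ldots a^{k_p}b^{\ell_p}$ with all $k_j,\ell_j > 0$, so its number of peaks is exactly $p$ and $L(w) = x^p \sum_i a_i y^i$. By Proposition \ref{propDyck2} the coefficient $a_i$ counts the indecomposable permutations whose cycles have the smallest elements and lengths prescribed by $w$ and which have $i$ left-to-right maxima. The point is that, as $w$ ranges over $D'_n$, these prescribed cycle data range without repetition over all possible cycle structures of indecomposable permutations of $\Sn$: the underlying shape of $\Delta(\theta)$ records precisely the smallest elements and the lengths of the cycles of $\theta$, so the fibres of $\theta \mapsto (\text{shape of }\Delta(\theta))$ are exactly the families appearing in Proposition \ref{propDyck2}, and the paths with $p$ peaks are those with $p$ occurrences of $b_0$, i.e. $p$ cycles. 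Summing over $w$ therefore collects, in the coefficient of $x^p y^q$, every indecomposable permutation of $\Sn$ with $p$ cycles and $q$ left-to-right maxima exactly once, which is the first claim.

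For the symmetry I would pass through the interpretation just obtained. By the first assertion the coefficient of $x^p y^q$ in $L'_n$ equals the number $c_{n,p,q}$ of indecomposable permutations of $\Sn$ with $p$ cycles and $q$ left-to-right maxima, so $L'_n(x,y) = L'_n(y,x)$ is equivalent to $c_{n,p,q} = c_{n,q,p}$ for all $p,q$. This is exactly what the bijection $\Phi$ built in the proof of the theorem answering the conjecture of Han and Foata establishes when restricted to indecomposable permutations: $\Phi$ sends an indecomposable permutation with $p$ cycles and $q$ left-to-right maxima to an indecomposable permutation with $q$ cycles and $p$ left-to-right maxima, the exchange being performed at the level of hypermaps by interchanging $\sigma$ and $\alpha$ (vertices and hyper-edges) under $\Psi$. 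The symmetry of $L'_n$ then follows at once.

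I expect the only delicate point to be the bookkeeping in the second paragraph, namely checking that distinct primitive Dyck paths give disjoint families of indecomposable permutations and that together they exhaust the indecomposable permutations of $\Sn$, so that the sum over $D'_n$ really counts each permutation once; here Theorem \ref{thBij1}, which guarantees that $\Delta$ is a bijection and that primitivity corresponds to indecomposability, does the work. By contrast, I would deliberately avoid attempting to prove the symmetry directly from the recurrences of the preceding Proposition: the auxiliary polynomials $L_n$ occurring there are \emph{not} symmetric (for instance $L_2 = xy + x^2$), so an induction on the recurrence does not obviously preserve the symmetry, and the bijective argument through the hypermap duality is far cleaner.
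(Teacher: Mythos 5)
Your proposal is correct and follows essentially the same route as the paper: the first assertion is obtained by combining Theorem \ref{thBij1} with Proposition \ref{propDyck2} (summing over primitive Dyck paths, with your fibre bookkeeping made explicit), and the symmetry is deduced from the fact, established earlier via the hypermap bijection $\Psi$ and the exchange of $\sigma$ and $\alpha$, that indecomposable permutations of $\Sn$ with $p$ cycles and $q$ left-to-right maxima are equinumerous with those with $q$ cycles and $p$ left-to-right maxima. Your closing remark that the recurrences for $L_n$ cannot yield the symmetry directly (since $L_n$ itself is not symmetric) is a sound observation, consistent with the paper's choice of argument.
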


\begin{proof}
The first part is a direct consequence of Theorem 3 and  Proposition 5.
The symmetry of the polynomials  follows from the fact that the number of
indecomposable  permutations of $\Sn$ having $p$ cycles and $q$
left-to-right maxima is equal to the number of
indecomposable  permutations of $\Sn$ having $q$ cycles and $p$
left-to-right maxima.
\end{proof}

\begin{coro}
The number of permutations of $\Sn$ having $p$ cycles and $q$
left-to-right maxima is the coefficient of $z^nx^py^q$ in the power
series~:
$$\frac{1}{1- zL'_1-z^2L'_2 -z^3 L'_3 \ldots}$$
\end{coro}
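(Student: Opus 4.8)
The plan is to read the right-hand side as the generating function obtained by applying the \emph{sequence} construction to the class of indecomposable permutations, and to combine this with the unique factorization of an arbitrary permutation into indecomposable blocks.

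First I would set up the bookkeeping. By the preceding corollary, for each $n$ one has $L'_n(x,y)=\sum_{p,q} c_{n,p,q}\,x^py^q$, where $c_{n,p,q}$ is the number of indecomposable permutations of $\Sn$ with $p$ cycles and $q$ left-to-right maxima; consequently the series
$$I(z,x,y)\ =\ \sum_{n\ge 1} z^n L'_n(x,y)$$
is the trivariate generating function in which $z$ marks the size, $x$ the number of cycles and $y$ the number of left-to-right maxima of an indecomposable permutation. Because $I$ has no constant term, the expression $1/(1-I)$ is a well-defined formal power series, and its denominator is exactly $1-zL'_1-z^2L'_2-z^3L'_3-\cdots$.

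Next I would invoke the decomposition used repeatedly in Section~1: every permutation $\beta$ of $\Sn$ factors uniquely as a concatenation $\beta=\theta_1\theta_2\cdots\theta_k$ of indecomposable permutations, the $i$-th of which acts on a consecutive interval of integers. Expanding the geometric series,
$$\frac{1}{1-I}\ =\ \sum_{k\ge 0} I(z,x,y)^k,$$
the coefficient of $z^n$ in $I^k$ ranges over all ordered $k$-tuples of indecomposable permutations whose sizes sum to $n$; by unique factorization these tuples are in bijection with the permutations of $\Sn$. It therefore remains to check that the statistics marked by $x$ and $y$ are additive under concatenation, so that the exponents accumulate correctly in the product.

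This additivity is the main point of the argument. For the number of cycles it is immediate, since the $\theta_i$ permute pairwise disjoint intervals and the cycles of $\beta$ are exactly the union of the cycles of the $\theta_i$. For the left-to-right maxima I would use the interval structure: every value appearing in the $i$-th block exceeds every value appearing in the earlier blocks, so a position in block $i$ is a left-to-right maximum of $\beta$ if and only if it is a left-to-right maximum of $\theta_i$ within its own block; hence the count is the sum of the counts of the $\theta_i$. This is precisely the additivity already recorded at the end of the proof of the right-to-left minima corollary. Granting it, the coefficient of $z^nx^py^q$ in $1/(1-I)$ enumerates the ordered factorizations of total size $n$ whose cycle numbers sum to $p$ and whose left-to-right-maxima numbers sum to $q$, which is the number of permutations of $\Sn$ with $p$ cycles and $q$ left-to-right maxima, as claimed.
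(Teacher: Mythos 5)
Your proof is correct and follows exactly the route the paper intends: the paper states this corollary without an explicit proof, but the ingredients you use---the preceding corollary identifying $L'_n$ as the generating polynomial of indecomposable permutations, the unique factorization of a permutation into indecomposable blocks, and the additivity of the cycle and left-to-right-maximum statistics under concatenation (which the paper itself records in the proofs of the nearby results)---are precisely the argument the paper relies on. Nothing is missing; the geometric-series reading of $1/(1-\sum_n z^nL'_n)$ as the sequence construction is the intended justification.
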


\section{Indecomposable fixed points free involutions and maps}

A {\em map}  is a pair of permutations $\sigma, \alpha$ where $\alpha$,
is a  fixed point free involution. Maps may be considered as a subset
 of the set hypermaps, but conversely a hypermap may be considered as a
 bipartite map \cite{walsh}.
Maps are  an important combinatorial and algebraic tool for dealing
with embeddings of graphs in surfaces (\cite{tutte2} \cite{edmonds})
they are sometimes  called {\em rotation system} (\cite{moharThomassen}).

\subsection{Fixed point free involutions}
A fixed point free involution is permutation where all cycles have
length $2$, the number of fixed point free involutions of ${\mathcal
  S}_{2m}$ is the double factorial~:
 $$(2m-1)!! \ = \ (2m-1) (2m-3) \ldots
  (3) (1) \ \ = \frac{(2m)!}{m! 2^m}$$
As expected, an indecomposable fixed point free involution is  a
 fixed point free involution which is
indecomposable as a permutation.\ The number  $i_m$ of 
indecomposable fixed point free involutions of ${\mathcal S}_{2m}$
satisfies the following inductive relation, which is very similar to that satisfied by the
 indecomposable permutations.

\begin{propo}
$$i_m  \ \ = \ \ (2m-1)!! - \sum_{p=1}^{m-1} i_p (2m -2p-1)!!$$
\end{propo}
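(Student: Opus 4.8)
The plan is to adapt the argument the paper already uses for the recurrence $c_n = n! - \sum_{p=1}^{n-1} c_p (n-p)!$. There the decisive fact is that every permutation of ${\mathcal S}_n$ factors uniquely as its first indecomposable block, of some length $p$ on $\{1,\ldots,p\}$, followed by an arbitrary permutation on $\{p+1,\ldots,n\}$; summing $c_p(n-p)!$ over $1\le p\le n$ recovers $n!$, and isolating the term $p=n$ (which equals $c_n$) gives the formula. I would run exactly this bookkeeping inside the class of fixed point free involutions, the only new ingredient being a parity observation.

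First I would record the structural fact special to involutions: if $\beta$ is a fixed point free involution of ${\mathcal S}_{2m}$ and $\{1,\ldots,p\}$ is a union of cycles of $\beta$, then $p$ is even, since $\{1,\ldots,p\}$ is then a disjoint union of $2$-cycles. Consequently the first indecomposable block of a fixed point free involution always has even length $2p$, and the restriction of $\beta$ to $\{1,\ldots,2p\}$ is itself an \emph{indecomposable} fixed point free involution: were it decomposable, a strictly smaller initial interval would be a union of cycles, contradicting minimality of the block.

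Next I would set up the counting. Given a fixed point free involution $\beta$ of ${\mathcal S}_{2m}$, let $2p$ be the length of its first indecomposable block. The restriction of $\beta$ to $\{1,\ldots,2p\}$ is one of the $i_p$ indecomposable fixed point free involutions on $2p$ points, while the restriction to $\{2p+1,\ldots,2m\}$ is an \emph{arbitrary} fixed point free involution on the remaining $2(m-p)$ points, of which there are $(2m-2p-1)!!$. Conversely, concatenating any indecomposable fixed point free involution on $\{1,\ldots,2p\}$ with any fixed point free involution on $\{2p+1,\ldots,2m\}$ produces a fixed point free involution whose first indecomposable block is precisely the chosen first factor, since no $2$-cycle straddles the two blocks. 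This gives a bijection, classified by $p$, between the fixed point free involutions of ${\mathcal S}_{2m}$ and the pairs just described, for $1\le p\le m$.

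Therefore
$$(2m-1)!! \ = \ \sum_{p=1}^{m} i_p\,(2m-2p-1)!!,$$
with the convention $(-1)!!=1$ accounting for the top term $p=m$, whose second factor counts the unique empty involution and which thus contributes exactly $i_m$. Isolating that term and transposing the rest yields the claimed recurrence. I do not anticipate any real obstacle: the only points needing care are the parity observation together with the boundary convention at $p=m$, and checking that the first block of a concatenation is the indecomposable factor and nothing smaller — both immediate once decomposability is phrased as ``some proper initial interval is a union of cycles.''
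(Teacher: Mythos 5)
Your proof is correct and follows essentially the same route as the paper, which also counts fixed point free involutions by splitting off the first indecomposable block (necessarily of even length) and taking an arbitrary fixed point free involution on the remaining points. The only difference is presentational: you sum over all $p\le m$ and isolate the $p=m$ term, while the paper directly counts the decomposable involutions ($p<m$) and subtracts from $(2m-1)!!$; your write-up also makes explicit the parity observation and the uniqueness of the first block, which the paper leaves implicit.
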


\begin{proof}
A decomposable fixed point free involution  of ${\mathcal S}_{2m}$ can
be  written as the concatenation of an indecomposable fixed point free
involution  of ${\mathcal S}_{2p}$ ($1<p<m$) 
 and a  fixed point free involution
of  ${\mathcal S}_{2m-2p}$.
\end{proof}

\subsection{Bijection}

The bijection between rooted hypermaps and indecomposable permutations
specializes for maps and fixed point free involutions in the following
way.

\begin{propo}
There exist a bijection $\Psi'$ between the set of  indecomposable
fixed point free involutions on  $1,2, \ldots,  2m+2$
and the set of rooted maps on
 $1,2, \ldots , 2m$.
Moreover for   $(\sigma, \alpha) = \Psi'(\theta)$,  the number of
cycles of $\sigma$  is equal to the number of
left-to-right maxima of $\theta$.
\end{propo}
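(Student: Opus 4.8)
The plan is to derive $\Psi'$ as a \emph{restriction} of the bijection $\Psi$ of Theorem \ref{thBij2}, rather than inventing a new correspondence. First I would feed an indecomposable fixed point free involution $\theta$ of ${\mathcal S}_{2m+2}$ straight into $\Psi$, reading it as an indecomposable permutation on $2m+2=(2m+1)+1$ symbols, and read off the shape of $(\sigma,\alpha)=\Psi(\theta)$ on the $2m+1$ darts $1,\ldots,2m+1$. Because $\alpha$ is obtained from $\theta$ by deleting the largest symbol $2m+2$ from its cycle, and the cycle of an involution through $2m+2$ is a transposition, $\alpha$ is a fixed point free involution plus exactly one fixed point. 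The crucial observation is that this fixed point is $i_k$: since $2m+2$ occupies the last left-to-right maximum we have $\theta(i_k)=2m+2$, whence $\theta(2m+2)=i_k$ by involutivity, so the cycle removed is $(i_k,2m+2)$ and the surviving fixed point of $\alpha$ is exactly the smallest element $i_k$ of the $\sigma$-cycle containing the root $2m+1$.

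I would then define $\Psi'(\theta)$ to be the rooted object obtained by \emph{deleting the dart} $i_k$: erase $i_k$ from its $\sigma$-cycle, drop the $\alpha$-fixed point $i_k$, and relabel the remaining darts increasingly onto $1,\ldots,2m$. The surviving $\alpha'$ has only $2$-cycles, so $(\sigma',\alpha')$ is a map. The first thing to check is that the $\sigma$-cycle $(i_k,\ldots,2m+1)$ of the root has length at least $2$, so that the deletion does not wipe out a whole cycle; this is precisely where indecomposability is used, for $i_k=2m+1$ would force the transposition $(2m+1,2m+2)$ in $\theta$ and hence make $\{1,\ldots,2m\}$ a union of cycles. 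Granting this, the number of $\sigma$-cycles is unchanged by the deletion, so by Theorem \ref{thBij2} the number of cycles of the map's $\sigma'$ equals the number of left-to-right maxima of $\theta$, which is the asserted statistic. Transitivity survives as well: as a fixed point of $\alpha$ the dart $i_k$ contributes only a loop to $G_{\sigma,\alpha}$, hence it is never a cut vertex of a $\sigma$-cycle of length at least $2$, and the remaining darts stay connected.

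The part I expect to be the real work is invertibility. The natural inverse takes a rooted map $(\sigma',\alpha')$ on $2m$ darts, reinstates a new dart as the smallest element of the $\sigma'$-cycle of the root and declares it a fixed point of the extended $\alpha$, and then applies $\Psi^{-1}$. What makes this promising is that in the normal form of Lemma \ref{lem1} the reconstruction $\theta=\Psi^{-1}(\sigma,\alpha)$ inserts $2m+2$ immediately after $i_k$ in its cycle; when $i_k$ is an $\alpha$-fixed point this turns $(i_k)$ into the transposition $(i_k,2m+2)$ while leaving every other cycle a transposition, so $\Psi^{-1}$ returns a fixed point free involution for free. The obstacle is to confirm that inserting and deleting the fixed point are genuinely mutually inverse operations at the level of \emph{rooted} hypermaps: that the added fixed point is forced to be the $i_k$ of the reconstructed permutation, so that the passage to the Lemma \ref{lem1} representative is compatible with the insertion, and that the result is always indecomposable. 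I would settle indecomposability with the same prefix argument as above and test the whole bijection against the recursion for $i_m$ in the cases $m=1,2$ before writing out the general verification.
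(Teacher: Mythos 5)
Your construction is exactly the paper's: apply $\Psi$ to $\theta$, observe that the resulting $\alpha$ has a unique fixed point, namely $\theta(2m+2)=i_k$, the smallest element of the root's $\sigma$-cycle, then delete that dart from both permutations and relabel the remaining darts onto $1,\ldots,2m$. The extra checks you supply (indecomposability forcing the root's cycle to have length at least $2$, preservation of transitivity, and the sketch of the inverse via insertion of a fixed point next to the root) are points the paper's very terse proof leaves implicit, so your proposal is correct and follows essentially the same route.
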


\begin{proof}
The bijection $\Psi$ associates with an indecomposable fixed point free
involution $\theta$ of ${\mathcal S}_{2m+2}$
a hypermap $(\sigma', \alpha')$ such that
$\alpha'$ has $m$ cycles of length 2 and 1 cycle of length 1.
The element in this cycle is $j= \theta(2m+2)$, clearly this $j$ is
not a left-to-right maxima of $\theta$. Consider the   pair
of permutations  $(\sigma, \alpha)$ obtained by deleting $j$ from its
cycle in $\sigma'$ and deleting the cycle of length 1 in $\alpha$,
then renumbering the darts by  $\phi(i) = i -1$ for $i > j$.  Then
$(\sigma, \alpha)$  is a map.
\end{proof}

Note that this construction was described in detail
  by P. Ossona de Mendez and P.Rosenstiehl (see \cite{ossonaRosenstiehl2}).
\begin{coro}
The number of rooted maps with $m$ edges is equal to $i_{m+1}$, and the
number of maps with $n$ vertices and $m$ edges is equal to the number
number of indecomposable fixed point free involutions with $n$
left-to-right maxima.
\end{coro}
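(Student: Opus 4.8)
The plan is to read off both statements directly from the bijection $\Psi'$ established in the preceding proposition, by tracking the two relevant parameters; no new construction is needed, since everything is already contained in that proposition.

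First I would settle the count of darts. In a rooted map the permutation $\alpha$ is a fixed point free involution, so every edge is a $2$-cycle of $\alpha$; a rooted map with $m$ edges therefore has exactly $2m$ darts, and conversely a rooted map on $1, 2, \ldots , 2m$ has $m$ edges. Thus ``rooted maps with $m$ edges'' and ``rooted maps on $1, 2, \ldots , 2m$'' denote one and the same family. The preceding proposition supplies a bijection $\Psi'$ between this family and the set of indecomposable fixed point free involutions on $1, 2, \ldots , 2m+2$. Since the latter lie in ${\mathcal S}_{2(m+1)}$, their number is $i_{m+1}$ by the very definition of $i_m$ as the number of indecomposable fixed point free involutions of ${\mathcal S}_{2m}$. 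Equating cardinalities across $\Psi'$ gives the first assertion.

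For the refinement by vertices I would restrict $\Psi'$ to maps with a prescribed number $n$ of vertices. The number of vertices of a map $(\sigma, \alpha)$ is by definition the number of cycles of $\sigma$, and the proposition asserts that for $(\sigma, \alpha) = \Psi'(\theta)$ the number of cycles of $\sigma$ equals the number of left-to-right maxima of $\theta$. Consequently $\Psi'$ carries the rooted maps with $m$ edges and $n$ vertices bijectively onto those indecomposable fixed point free involutions on $1, 2, \ldots , 2m+2$ that have exactly $n$ left-to-right maxima, and equating the two cardinalities yields the second assertion.

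There is no genuine obstacle here, as the entire content is the parameter-preservation already built into $\Psi'$. The only point requiring care is the bookkeeping of indices: one must check that $m$ edges corresponds to $2m$ darts and hence to involutions on $2m+2$ points counted by $i_{m+1}$ (and not $i_m$), and that the statistic matched by $\Psi'$ is precisely ``number of vertices'' on the map side against ``number of left-to-right maxima'' on the involution side. Once these identifications are made, both equalities follow by restricting the bijection to the corresponding fibres.
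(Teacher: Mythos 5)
Your proposal is correct and is essentially the paper's own (implicit) argument: the corollary is stated immediately after the proposition establishing $\Psi'$, and follows exactly as you say by identifying rooted maps with $m$ edges with rooted maps on $1,\ldots,2m$, equating cardinalities across $\Psi'$ to get $i_{m+1}$, and restricting the bijection to the fibre of maps with $n$ vertices using the parameter correspondence (cycles of $\sigma$ versus left-to-right maxima of $\theta$) built into $\Psi'$. Your care with the index shift ($2m$ darts corresponding to involutions in ${\mathcal S}_{2m+2}$, hence $i_{m+1}$ rather than $i_m$) matches the paper precisely.
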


\subsection{Involutions and Dyck paths}

Theorem \ref{thBij1} specializes for fixed-point free involutions in
the following.

\begin{propo}
Let $\theta$ be a fixed point free involution of  ${\mathcal S}_{2m}$
and $w$ the labeled Dyck path
associated with it then:
\begin{itemize}
\item
The length of $w$ is $4m$ and there  are $m$ factors of $aab_0$ in  $w$.
\item
The fixed point free involution
 $\theta$ is indecomposable if and only if the word $w$
is primitive.
\item
If the permutation $\theta$ is indecomposable, then the number
of left-to-right maxima is equal to the number of occurrences of $b_1$
in $w$.
\end{itemize}
\end{propo}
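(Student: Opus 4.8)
The plan is to derive all three assertions directly from Theorem \ref{thBij1} by exploiting the single structural fact that a fixed point free involution of ${\mathcal S}_{2m}$ has all its cycles of length exactly $2$, and hence precisely $m$ cycles. Once this is noted, the three bullets become specializations of the three bullets of Theorem \ref{thBij1}, with only the first requiring a short bookkeeping argument.

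First I would establish the length of $w$ and the count of $aab_0$ factors. In the construction of $\Delta$, each of the $2m$ elements of $\{1,\ldots,2m\}$ contributes exactly one occurrence of the letter $b$ to $w$ (namely $b_0$ if it opens a cycle, and some $b_j$ otherwise), so $w$ has $2m$ occurrences of $b$. The letters $a$ arise only when the smallest element of a cycle of length $k$ is processed, contributing $a^k$; summing $k$ over all cycles yields the total number of elements, namely $2m$. Thus $w$ has $2m$ letters $a$ and $2m$ letters $b$, giving length $4m$. Since every cycle has length $k=2$, the smallest element of each cycle produces the factor $f_i = a^2 b_0 = aab_0$, and these account for exactly the $m$ occurrences of $b_0$ (one per cycle), so $w$ contains precisely $m$ factors $aab_0$ and no other source of $b_0$.

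For the second bullet I would simply invoke the second bullet of Theorem \ref{thBij1}: as $\theta$ is a permutation, it is indecomposable if and only if $w = \Delta(\theta)$ is primitive, and nothing peculiar to involutions enters. Likewise the third bullet is immediate from the third bullet of Theorem \ref{thBij1}, which states that for an indecomposable permutation the number of left-to-right maxima equals the number of occurrences of $b_1$ in $\Delta(\theta)$.

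I do not anticipate any genuine obstacle: the proposition is a specialization, and the only point meriting verification is the counting in the first bullet, i.e. the length $4m$ and the $m$ occurrences of $aab_0$. Both rest entirely on the observation that all cycle lengths equal $2$, which also fixes the total number of $a$'s at $2m$. The remaining two bullets transfer verbatim from Theorem \ref{thBij1}.
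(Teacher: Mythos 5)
Your proof is correct and takes essentially the same approach as the paper: the paper states this proposition without any written proof, presenting it as an immediate specialization of Theorem \ref{thBij1}, which is precisely your argument. Your bookkeeping for the first bullet (every cycle has length $2$, hence $2m$ letters $a$ and $2m$ letters $b$, with each $b_0$ preceded by exactly $aa$) correctly supplies the only detail the paper leaves implicit.
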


Replacing in $w$ each factor $aab_0$ by $a$ gives a new kind of
labeled Dyck path $w'$ in which there is no occurrence of $b_0$ and
in which  for any occurrence $w'= ub_iv$ of $b$ we have
$$ |u|_a - |u|_b \geq i$$

This suggests to associate the  polynomial $M(w)$ with a Dyck path
$w$ by:
\begin{Definition}
 Let $w$ be a Dyck path of length $2n$ and
let $w'_1b, w'_2b, \ldots w'_nb$ be the left factors of $w$ ending with
 an occurrence of $b$, that is~ $w = w'_ibw''_i$. The polynomial
$M(w)$ is the product of binomials $v_i$
associated with each occurrence of $b$ by the following rule:
 Let $h_i = |w'_ib|_a -|w'_ib|_b$
 $v_i = y + h_i$
\end{Definition}

\subsection{An enumeration formula}
We define the polynomials $M_m(y)$, and  $M'_m(y)$ by
$M_m(y) = \sum_{w \in D_m} M(w)$
and $M'_m(y) = \sum_{w \in D'_m} M(w)$.

\begin{propo}\label{propEqV}
The polynomials $M_m(y)$ satisfy the following equations:
$$M_1(y) = M'_1(y) = y $$
and  for $m >1$:

$$ M'_m (y) = yM_{m-1}(1+y)$$
$$ M_m (y) = M'_m(y) + \sum_{p=1}^{n-1} M'_p(y) M_{m-p}(y)$$
Moreover, in $M'_m(y)$ the coefficient of $y^n$ is the number
of rooted  maps with $m-1$ edges and $n$ vertices.
\end{propo}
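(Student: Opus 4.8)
The two recurrences are exact analogues of those already proved for the polynomials $L_n$, and I would establish them the same way. For the base case one checks directly that the only Dyck path of length $2$ is $ab$, whose single left factor $w'_1b=ab$ has $h_1=0$, so $M(ab)=y$ and hence $M_1(y)=M'_1(y)=y$. For the first recurrence I would use $D'_m=aD_{m-1}b$ and track how $M$ behaves under $w\mapsto awb$: prefixing an $a$ raises by one the height $h_i=|w'_ib|_a-|w'_ib|_b$ recorded at every $b$ inherited from $w$, so each factor $y+h_i$ becomes $(y+1)+h_i$, while the extra final $b$ closes the path at height $0$ and contributes a factor $y$. Hence $M(awb)$ equals $y$ times the value of $M(w)$ at $y+1$, and summing over $w\in D_{m-1}$ gives $M'_m(y)=yM_{m-1}(1+y)$. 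For the second recurrence I would note that $M$ is multiplicative under concatenation — a $b$ lying in the second factor $w_2$ of $w_1w_2$ sees $w_1$ only through $|w_1|_a-|w_1|_b=0$, so its recorded height is unchanged, whence $M(w_1w_2)=M(w_1)M(w_2)$; splitting each path of $D_m$ as either primitive or a primitive prefix of length $2p$ followed by an arbitrary path of length $2(m-p)$ then yields $M_m=M'_m+\sum_{p=1}^{m-1}M'_pM_{m-p}$.

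For the combinatorial assertion the plan is first to read off the coefficients of $M(w)$ in the manner of Proposition \ref{propDyck1}. Writing $M(w)=\prod_i(y+h_i)$ over the $m$ occurrences of $b$ in $w\in D_m$, the admissible labels of the $i$-th occurrence (in the reduced labeling, with no $b_0$ and $1\le i\le|u|_a-|u|_b=h_i+1$) are $b_1,\dots,b_{h_i+1}$. Interpreting the summand $y$ in $y+h_i$ as the choice of label $b_1$ and the summand $h_i$ as the choice of one of the remaining $h_i$ labels, one sees that the coefficient of $y^n$ in $M(w)$ is the number of such labelings of $w$ in which exactly $n$ occurrences of $b$ carry the label $b_1$. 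Summing over $w\in D'_m$, the coefficient of $y^n$ in $M'_m(y)$ is therefore the number of primitive reduced labeled Dyck paths of length $2m$ having exactly $n$ labels $b_1$.

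It then remains to identify these objects with rooted maps. Here I would invoke the reduction $aab_0\mapsto a$ introduced just before the definition of $M$: it is a height-preserving bijection between the labeled Dyck paths of length $4m$ with $m$ factors $aab_0$ — that is, the codes $\Delta(\theta)$ of fixed point free involutions $\theta$ of ${\mathcal S}_{2m}$ — and the reduced labeled Dyck paths of length $2m$. Because the reduction preserves the height at every block boundary and leaves every label $b_i$ with $i>0$ untouched, it carries primitive paths to primitive paths and preserves the number of occurrences of $b_1$. Combined with the specialization of Theorem \ref{thBij1} to fixed point free involutions (primitivity of the code is equivalent to indecomposability of $\theta$, and the number of $b_1$'s equals the number of left-to-right maxima of $\theta$), this shows that the coefficient of $y^n$ in $M'_m(y)$ equals the number of indecomposable fixed point free involutions of ${\mathcal S}_{2m}$ with $n$ left-to-right maxima. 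Finally, by the bijection $\Psi'$ and its corollary, with the index shift $2m=2(m-1)+2$, this count equals the number of rooted maps with $m-1$ edges and $n$ vertices, which is the claim.

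Setting the routine recurrences aside, I expect the delicate point to be the bijective bookkeeping of the third paragraph: one must verify that $aab_0\mapsto a$ really is height-preserving at precisely the positions where the labels $b_i$ ($i>0$) sit, so that both the primitivity of the underlying path and the statistic ``number of $b_1$'s'' transfer intact, and then match the index shift so that involutions of ${\mathcal S}_{2m}$ correspond to maps with $m-1$ rather than $m$ edges.
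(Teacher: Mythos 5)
Your proof is correct and follows exactly the route the paper intends: in fact the paper states this proposition \emph{without} proof, relying implicitly on the analogy with the recurrences for $L_n, L'_n$ (Proposition 6) and on the $aab_0 \mapsto a$ reduction combined with Proposition 7 and the corollary on the bijection $\Psi'$ --- precisely the ingredients you assembled. Your explicit checks that the reduction preserves heights, primitivity and the number of occurrences of $b_1$, together with the index shift $2m = 2(m-1)+2$ giving maps with $m-1$ edges, supply details the paper leaves tacit, and you also correctly read the summation bound as $m-1$ where the paper has the typo $n-1$.
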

The first values of $M'_m$ are:
$$ M'_2 = y^2 + y, \ \ M'_3 = 2y^3 + 5y^2 + 3y, \ \
M'_4 = 5 y^4 + 22y^3 + 32 y^2 + 15y$$

This gives a simpler proof of a formula given by
D. Arques and J. F. B\'eraud \cite{arquesBeraud}
(see also  \cite{walshLehman} for the computation
of the first values).

\begin{coro}
Let $U(z,y)$ be the formal power series enumerating rooted maps
by the number of edges and vertices,
$$U(z, y) \ \ \ = \sum_{m,p>0}\mu_{m,p}z^my^p   =\ \ \ \sum_{m\geq 1} z^mM'_m(y),$$
where $\mu_{m,p}$ is the number of rooted maps with $m$ edges and $p$ vertices.\
Then
$$U(z,y) \ \ = \ \ y + zU(z,y)U(z,y+1)$$

\end{coro}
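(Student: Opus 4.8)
The plan is to read the functional equation directly off the two recurrences of Proposition \ref{propEqV}, after passing to bivariate generating series in the edge-counting variable $z$. First I would record the combinatorial content already supplied by that proposition: the coefficient of $y^p$ in $M'_m(y)$ is the number of rooted maps with $m-1$ edges and $p$ vertices, so that $\sum_{m\geq 1} z^{m-1} M'_m(y) = U(z,y)$, equivalently
$$\sum_{m\geq 1} z^m M'_m(y) \ = \ z\,U(z,y).$$
This is the first displayed equality of the statement, the extra factor $z$ accounting for the shift of one in the edge count. It remains to derive the quadratic relation. To that end introduce the two formal power series
$$F(z,y) = \sum_{m\geq 1} z^m M_m(y), \qquad G(z,y) = \sum_{m\geq 1} z^m M'_m(y) = z\,U(z,y),$$
both lying in $\mathbb{Q}[y][[z]]$ and both having vanishing constant term in $z$.

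Next I would translate the second recurrence $M_m(y) = M'_m(y) + \sum_{p=1}^{m-1} M'_p(y) M_{m-p}(y)$. Multiplying by $z^m$ and summing over $m\geq 1$, the convolution on the right is exactly the Cauchy product $G\,F$, so $F = G + G\,F$, whence
$$1 + F \ = \ \frac{1}{1-G}.$$
This inversion is legitimate in $\mathbb{Q}[y][[z]]$ precisely because $G$ has zero constant term in $z$, so $1-G$ is a unit in the power-series ring.

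Then I would treat the first recurrence, namely $M'_1(y) = y$ together with $M'_m(y) = y\,M_{m-1}(y+1)$ for $m>1$. Summing $z^m M'_m(y)$ over $m\geq 1$, pulling out the $m=1$ term, and re-indexing the tail by $k = m-1$ gives
$$G(z,y) \ = \ zy + zy \sum_{k\geq 1} z^{k} M_k(y+1) \ = \ zy\bigl(1 + F(z,y+1)\bigr).$$
The crucial feature here is the substitution $y \mapsto y+1$ inherited from $M'_m(y) = y\,M_{m-1}(y+1)$; this is what ultimately produces the argument $y+1$ in the functional equation, and keeping this shift straight is the one place that genuinely requires care.

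Finally I would combine the two relations. Evaluating $1 + F = (1-G)^{-1}$ at $y+1$ and inserting it into the previous display yields
$$G(z,y) \ = \ \frac{zy}{1 - G(z,y+1)}.$$
Substituting $G(z,y) = z\,U(z,y)$ and $G(z,y+1) = z\,U(z,y+1)$ and cancelling the common factor $z$ gives $U(z,y)\bigl(1 - z\,U(z,y+1)\bigr) = y$, that is,
$$U(z,y) \ = \ y + z\,U(z,y)\,U(z,y+1),$$
as claimed. I do not anticipate any serious obstacle beyond the bookkeeping: the only two points needing attention are the formal-power-series invertibility of $1-G$ (guaranteed by the vanishing of its $z$-constant term) and the correct propagation of the shift $y\mapsto y+1$ coming from the first recurrence.
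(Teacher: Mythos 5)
Your proof is correct, and in essence it follows the same route as the paper: both arguments translate the two recurrences of Proposition \ref{propEqV} into identities of formal power series in $z$ and then eliminate the series built on the $M_m$'s. The organizational difference is minor. The paper first eliminates $M_m$ at the coefficient level, obtaining
$$M'_{m+1}(y) \ = \ yM'_m(y+1) + \sum_{p=1}^{m-1}M'_p(y+1)\,M'_{m+1-p}(y),$$
and then multiplies by $z^{m+1}$ and sums; you instead sum each recurrence separately, introducing the auxiliary series $F(z,y)=\sum_{m\geq 1}z^mM_m(y)$, and eliminate $F$ through the inversion $1+F=(1-G)^{-1}$, which is legitimate for the reason you give. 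One point where your write-up is genuinely more careful than the paper: you observe that $\sum_{m\geq 1}z^mM'_m(y)$ equals $z\,U(z,y)$ rather than $U(z,y)$, since by Proposition \ref{propEqV} the polynomial $M'_m$ counts rooted maps with $m-1$ edges. The paper's displayed identification $U(z,y)=\sum_{m\geq 1}z^mM'_m(y)$ is off by exactly this factor of $z$: the series as literally defined there satisfies $U=zy+U(z,y)U(z,y+1)$, not the stated equation, and the paper's own summation step glosses over this shift. Your normalization $G=zU$ is the one under which the claimed functional equation $U(z,y)=y+zU(z,y)U(z,y+1)$ holds, so your bookkeeping silently repairs a small inconsistency in the statement while proving it.
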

\begin{proof}

By proposition \ref{propEqV}
we have~:
$$ \frac{M'_{m+1}(y)}{y} \ \ = \ \  M_m (y+1) = M'_m(y+1) + \sum_{p=1}^{n-1}
M'_p(y+1) M_{m-p}(y+1) \  $$
$$ \frac{M'_{m+1}(y)}{y} \ \ = M'_m(y+1) + \sum_{p=1}^{n-1}
M'_p(y+1) \frac{M'_{m+1-p}(y)}{y}$$
Hence~:
$$ M'_{m+1}(y) \ \ = y M'_m(y+1) + \sum_{p=1}^{n-1}
M'_p(y+1) M'_{m+1-p}(y)$$
Multiplying these  equalities by $z^{m+1}$ for all $m>1$ and adding
gives the result.
\end{proof}

Similar results were obtained recently by D. Drake for 
Hermite polynomials related to weighted involutions which he calls
matchings (see \cite{drake}).

\section*{Acknowledgments} The author wishes to thanks warmly,
Domnique Foata, Antonio Mach\'i, Gilles Schaeffer and Xavier Viennot
for fruitful discussions and comments on earlier versions of this paper.

\bibliography{permutations}
\bibliographystyle{plain}

\end{document}